\newtheorem{thm}{Theorem}[section]
\newtheorem{cor}[thm]{Corollary}
\newtheorem{prop}[thm]{Proposition}
\theoremstyle{definition}
\newtheorem{defn}[thm]{Definition}
\theoremstyle{remark}
\newtheorem{rem}[thm]{Remark}
\numberwithin{equation}{section}
\def \<{\langle}
\def \>{\rangle}
\newcommand{\R}{\mathbb{R}}
\newcommand{\N}{\mathbb{N}}
\newcommand{\B}{\mathbb{B}}
\begin{document}

\title{The minimum time function for the controlled Moreau's sweeping process}

\author[Giovanni Colombo]{Giovanni Colombo}
\address[Giovanni Colombo]{Universit\`a di Padova, Dipartimento di Matematica and I.N.d.A.M research group, via Trieste 63, 35121 Padova, Italy}
\email{colombo@math.unipd.it}

\author[Michele Palladino]{Michele Palladino}
\address[Michele Palladino]{Penn State University, Department of Mathematics, University Park, Pa. 16802, U.S.A.}
\email{mup26@psu.edu}

\thanks{This work was partially supported by 
the European Union under the 7th Framework Programme ``FP7-PEOPLE-2010-IT'', Grant agreement number 264735-SADCO}

\keywords{Weak and strong invariance, dynamic programming, Hamilton-Jacobi inequalities, one-sided Lipschitz dynamics.}

\subjclass[2000]{49N60, 49N05, 49J52}

\date{\today}
\begin{abstract}
Let $C(t)$, $t\geq0$ be a Lipschitz set-valued map with closed and (mildly non-)convex values and $f(t, x,u)$ be a map,
Lipschitz continuous w.r.t. $x$.
We consider the problem of reaching a target $S$ within the graph of $C$ subject to the differential inclusion
\[
(\star)\qquad \dot{x} \in -N_{C(t)}(x) + G(t,x)
\]
starting from $x_{0}\in C(t_{0})$ in the minimum time $T(t_{0},x_{0})$. The dynamics $(\star)$ is called a perturbed sweeping (or Moreau) process. 
We give sufficient conditions for $T$ to be finite and continuous and characterize
$T$ through Hamilton-Jacobi inequalities. Crucial tools for our approach are characterizations of weak and strong flow invariance of a set $S$
subject to $(\star)$. Due to the presence of the normal cone $N_{C(t)}(x)$, the right hand side of $(\star)$ contains implicitly the state constraint
$x(t)\in C(t)$ and is not Lipschitz continuous with respect to $x$. 
\end{abstract}
\maketitle

\section{Introduction}
%
%
%
The sweeping process is the time dependent evolution inclusion 
\begin{equation} \label{Sweeping}
\dot{x}(t) \in -N_{C(t)}(x(t)), \quad t\geq0,
\end{equation}
where $N_{C(t)}(x)$ denotes a suitable normal cone to $C(t)$  at $x$ (details will be provided in the sequel).
This dynamics was introduced in the 1970's  by J. J. Moreau to model quasi static evolution processes
subject to unilateral constraints (see \cite{Mo74}, \cite{Mo77}). 
Here the state trajectory $x(\cdot)$ is supposed to belong to a (possibly) infinite dimensional Hilbert space and $C(\cdot)$ is  
a moving convex set, Lipschitz continuous with respect to time. Several existence (and possibly uniqueness) results for 
the Cauchy problem associated to various generalisations of \eqref{Sweeping} were obtained by several authors 
(see \cite{marq,KM,CT} and references therein). 
In particular existence and uniqueness properties of solutions for the problem
\begin{equation} \label{Sweeping_pert}
\left\{
\begin{aligned}
\dot{x}(t) &\in -N_{C(t)}(x(t))+f(x(t)), \quad t\geq0.
\\
x(0)&=x_{0}\in C(0)
\end{aligned}
\right.
\end{equation}
are well known for a Lipschitz continuous $f$, even in the case in which $C$ 
is mildly non-convex (actually, uniformly prox-regular,
see \cite[Theorem 4.4]{Th03}). 
There has been an increasing interest in a number of variants of the sweeping process as a model, e.g., for some electric circuits \cite{ABB}, 
for crowd motion \cite{venel}, for hysteresis \cite{krejci}, and as a tool for identification of parameters \cite{brprta} for some mechanical systems
subject to unilateral constraints. On the other hand, a natural modification of such given models concerns 
the case in which the behavior of the whole system is modified in order to satisfy some expected performance requirements.
Following the latter direction, control theory of sweeping process comes into the picture. In general, the control may appear  
on the moving set $C$ (see \cite{CHHM,CHHM1,CHHM2} for the case where $f\equiv 0$ and $C$ is a moving polyhedron to be determined)
and/or on the perturbation $f$ (see, e.g., \cite{TCB1,TCB2} and particularly \cite{brokre}, where a complete discussion of the maximum principle appears,
in the case where $C$ is fixed, smooth, and strictly convex). 
In this paper we focus on the case where 
$C$ is a given, nonconstant, multifunction and $f$ is actually $f(x,u)$, $u\in U$ being the control. 

Observe that the state constraint $x(t)\in C(t) $ is implicitly satisfied by any solution of  
\eqref{Sweeping_pert}, since the normal cone $N_{C(t)}(x)$ is empty whenever $x\notin C(t)$ and it is equal to 
$\{ 0\}$ whenever $x\in \mathrm{int}\,C(t)$. 
In particular, the trajectory $x(t)$ is driven by $f(x(t))$ 
as long as it lies in the interior of $C(t)$, while, when on the boundary, the normal cone component of $\dot{x}(t)$ may be nonvanishing  
in a way that actually forces the state trajectory to remain in $C(t)$ for all $t$. Therefore \eqref{Sweeping_pert} can be 
regarded as a state constrained problem where the constraint is ``active in the dynamics''. 
In the case when $C(\cdot)\equiv C$ such a problem is referred to as having \textit{reflecting boundary} \cite{lisznit,Se05}. 

The aim of the present paper is studying the minimum time problem to reach a target subject to   
\begin{equation}\label{PSP_intro}
\left\{ \begin{array}{l}
\dot{x}(t)\,\in \,  -N_{C(t)}(x(t)) + f(x(t),u(t))\quad \mathrm{a.e.}\; t > t_{0},\quad u(t)\in U,
\\
x(t_0)=x_{0}\in C(t_{0})\;,
\\
x(t)\in C(t)\quad \mathrm{for}\;\mathrm{all} \quad t > t_{0}.
\end{array}\right.
\end{equation}
In particular, we focus on the minimum time function $T(\cdot,\cdot)$ as depending on both initial conditions. 
We give sufficient conditions for $T$ to be continuous (with an explicit modulus of continuity) 
and we characterise $T$ as the unique viscosity solution of a suitable set of Hamilton-Jacobi inequalities. 
A crucial role  in our analysis is played by the Hamiltonian characterisations of weak and strong flow invariance 
for a closed set $K$ subject to \eqref{PSP_intro}. Here we follow a set-valued approach to Hamilton-Jacobi 
characterisation which was designed by several authors (see the historical notes to Chapter 12 in \cite{vinter}), starting from 
Frankowska \cite{Fr} and, specifically for the minimum time, from Wolenski-Zhuang \cite{WoZh}. 
According to this approach, the epigraph and the hypograph of $T$ are shown to be, respectively, weakly and 
strongly invariant for a suitable augmented dynamics and the main point of the proof is actually giving the characterizations
of weak and strong invariance through Hamiltonian inequalities. The main difficulty in this framework is the 
lack of Lipschitz continuity of the right hand side of \eqref{PSP_intro}, which has just closed graph and it is not 
locally Lipschitz continuous with respect to $x$. In order to overcome this difficulty, 
one may observe that, using the close-to-convexity assumptions on $C(t)$, the right hand 
side of \eqref{PSP_intro} is one-sided Lipschitz (see \cite{Do91}).  
This condition is known to be  a good substitute of Lipschitz continuity in particular with 
respect to the characterisation of strong invariance (see \cite{DoRiWo}), which in general is a difficult issue to solve.
Here, our main contribution is Theorem \ref{SI}, where, taking into account the particular structure of 
\eqref{PSP_intro}, a characterisation of strong invariance by means of a \textit{single} 
Hamiltonian inequality is given. This is in contrast with the (albeit more general) result contained in \cite{DoRiWo},
where the necessary and the sufficient condition for strong invariance use \textit{different} Hamiltonians.
The main result of the paper is Theorem \ref{main}, where the Hamilton-Jacobi characterisation is provided. 
In Section \ref{sec:cont} a simple sufficient condition for the continuity of $T$ is given, while 
Section \ref{Ex} contains two examples where the minimum time function is computed through a verification argument.
Finally, Section \ref{sec:comp} is devoted to a comparison of our main result with the 
HJ-characterisation of the minimum time $T$ for classical state constrained problems,

We mention  that the case of a Mayer problem with $C(t)\equiv C$ was treated by Serea \cite{Se05} via different techniques. 
However, while the reflecting boundary problem solution can be characterised through the projection operator 
onto the tangent cone to $C$, it turns out that this is not the case when we consider a moving set $C$. 
This fact will become evident in Example 1, Section \ref{Ex}. Partial results, in the framework of Hamilton-Jacobi theory,
for the controlled sweeping process were also obtained in \cite{CMMR}. In particular, the value function
of an optimal control problem for \eqref{PSP_intro} involving an integral functional with finite horizon is proved to be
a viscosity subsolution of a suitable PDE of Hamilton-Jacobi type.

Finally we remark that all statements will be formulated in terms of $G(x)=f(x,U)$, $G$ Lipschitz and compact, convex valued.
Actually, it is well known that under such assumptions the two approaches are equivalent (see, e.g., \cite[?]{AuFr}). 
An explicit dependence of $G$ on $t$ will also be allowed.
 
\section{Preliminaries}
We first fix some notations and then proceed to define some concepts which will be used throughout the paper.

The unit ball in $\mathbb{R}^n$ is denoted by $\mathbb{B}$. The positive cone generated by a set $A\subset \R^n$ is
\[
\text{cone}\, A = \{ tx : x\in A, t\ge 0\}  
\]
and we set also $\| A\| = \sup\{ \| x\|: x\in A\}$. For a function $f:A\to \R$ we will consider both the \textit{epigraph} and the
\textit{hypograph} of $f$, denoted by $\text{epi}(f)$ and $\text{hypo}(f)$, respectively. The graph of a (possibly set-valued) map
$F:A\leadsto \R$ is named ${\rm graph}(F) := \{ (x,y)\in A\times\R : y\in F(x)\}$. 
The usual (semi)continuity as well as Lipschitz continuity concepts
for a set-valued map will be used (see, e.g., \cite[Section 1.1]{AuCe}). A map $F:[0,+\infty)\times A\leadsto \R^n$ is said
\textit{almost upper semicontinuous} if it is jointly Lebesgue$\times$Borel measurable with respect to $(t,x)$ and u.s.c. with respect to
$x$ for a.e.~$t$. It is well known that this property is equivalent to a Scorza-Dragoni type of joint continuity, as stated, e.g., in \cite{DoRiWo}.

Let $C\subset\R^n$ be closed with boundary $\mathrm{bdry} C$.
Some concepts of nonsmooth analysis will be needed (see, e.g. \cite[Chapters 1 and 2]{CLSW} for more details). Given $x \in K$ and $v \in \R^n$,
we say that $v$ is a \emph{proximal normal} to $K$ at $x$, and denote as $v\in N_K(x)$,
provided that there exists $\sigma=\sigma(v,x)\ge 0$ such that
\[
v\boldsymbol{\cdot} (y-x)\le\sigma \|y-x\|^2,\quad \textrm{ for all }y\in K.
\]
If $K$ is convex, then $N_K(x)$ coincides with the normal cone of Convex Analysis.

Let $\Omega \subset \R^n$ be open and let $f: \Omega \rightarrow \R \cup\{+\infty\}$ be lower semicontinuous. 
The \textit{proximal subdifferential} $\partial_P f(x)$ of $f$ at a point $x$ of dom$(f)=\{ x:f(x)<+\infty\}$ is the set of vectors $v \in \R^n$ such that 
\[
(v,-1)\in N_{\text{epi}(f)}(x,f(x)).
\]
Symmetrically, for an upper semicontinuous $f$ the \textit{proximal superdifferential} $\partial^P f(x)$ of 
$f$ at a point $x\in\mathrm{dom}(f)$ is the set of vectors $v\in \R^n$ such that
\[
(-v,1)\in N_{\text{hypo}(f)}(x,f(x)).
\]
\textit{Prox-regular sets} will play an important role in the sequel. The definition was first given by Federer in \cite{Fe}, 
under the name of \textit{sets with positive reach}, and later studied by several authors (see the survey paper \cite{CT}).
\begin{defn} \label{posreach} 
Let $C\subset \R^n$ be closed and $r>0$ be given. We say that $C$ is $r$-prox-regular provided the inequality
\begin{equation}\label{ineqphi}
\langle \zeta,y-x\rangle\le \frac{1}{2r} \| \zeta\|\,\| y-x\|^2
\end{equation}
holds for all $x,y\in C$ and $\zeta\in N_{C}(x)$.
\end{defn}
In particular, every convex set is $r$-prox regular for every $r>0$.
Prox-regular sets enjoy several properties, including uniqueness of the metric projection and differentiability of the 
distance (in a suitable neighborhood) and normal regularity, which hold also true for convex sets, see, e.g. \cite{CT}.
We prove here a property that will be used in the sequel in the case in which $C$ depends on a parameter. It is a simple consequence of \eqref{ineqphi}.
\begin{prop}\label{almostusc}
Let $r>0$ be given and let $C: [0,T] \leadsto \R^n$ be a continuous set valued map such that, for all $t\in [0,T]$, 
$C(t)$ is $r$-prox regular. Then the map $F:\text{graph}(C)\leadsto \R^n$, $F(t,x)=N_{C(t)}(x)$ has closed graph. Consequently, $\tilde{F}(t,x):=
F(t,x)\cap\B$ is upper semicontinuous.
\end{prop}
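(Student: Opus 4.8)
The plan is to derive the closed graph property of $F$ directly from the prox-regularity inequality \eqref{ineqphi}, and then to obtain the upper semicontinuity of $\tilde F$ as a routine consequence. The point that makes the argument work is that, for an $r$-prox-regular set, the constant $\sigma$ appearing in the definition of proximal normal can be taken equal to $\frac{1}{2r}\|\zeta\|$, i.e.\ it depends only on $\|\zeta\|$ and $r$ and not on the local shape of $C(t)$ near $x$; this uniformity is exactly what is preserved under a passage to the limit.

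For the closed graph I would pick a sequence $(t_n,x_n,\zeta_n)\in\text{graph}(F)$ --- so that $x_n\in C(t_n)$ and $\zeta_n\in N_{C(t_n)}(x_n)$ --- converging to some $(t,x,\zeta)$, and show $(t,x,\zeta)\in\text{graph}(F)$. Since $C$ is continuous with closed values it has closed graph, hence $x\in C(t)$. Then I would fix an arbitrary $y\in C(t)$ and invoke the lower semicontinuity of $C$ to obtain $y_n\in C(t_n)$ with $y_n\to y$. Applying \eqref{ineqphi} to the triple $x_n,y_n,\zeta_n$ gives $\langle\zeta_n,y_n-x_n\rangle\le\frac{1}{2r}\|\zeta_n\|\,\|y_n-x_n\|^2$ for every $n$, and letting $n\to\infty$ yields $\langle\zeta,y-x\rangle\le\frac{1}{2r}\|\zeta\|\,\|y-x\|^2$. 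As $y\in C(t)$ was arbitrary, $\zeta$ satisfies the defining condition of a proximal normal to $C(t)$ at $x$ with $\sigma=\frac{1}{2r}\|\zeta\|$, so $\zeta\in N_{C(t)}(x)$ and $(t,x,\zeta)\in\text{graph}(F)$.

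To conclude, observe that $\text{graph}(\tilde F)=\text{graph}(F)\cap\big([0,T]\times\R^n\times\B\big)$ is closed, and that $\tilde F$ has nonempty values all contained in the compact set $\B$; a set-valued map whose graph is closed and whose range lies in a fixed compact set is upper semicontinuous, which gives the last assertion.

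I do not expect a genuine obstacle here: the core is a short limiting argument. The only points deserving attention are that both semicontinuity properties of $C$ are actually used --- upper semicontinuity to keep the base point $x$ inside $C(t)$, lower semicontinuity to approximate competitors $y\in C(t)$ by points $y_n\in C(t_n)$ --- so that full continuity of $C$ cannot be relaxed for free; and that prox-regularity is indispensable, since without \eqref{ineqphi} the constant $\sigma(v,x)$ in the definition of proximal normal need not be locally bounded and the normal-cone map $N_{C(\cdot)}(\cdot)$ can be highly discontinuous on $\text{graph}(C)$.
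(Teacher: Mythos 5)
Your argument is correct and follows essentially the same route as the paper's proof: extract a convergent sequence in the graph, use lower semicontinuity of $C$ to approximate a fixed competitor $y\in C(t)$ by $y_j\in C(t_j)$, pass to the limit in the prox-regularity inequality \eqref{ineqphi}, and then deduce upper semicontinuity of $\tilde F$ from closed graph plus values in the compact set $\B$. The only cosmetic difference is that the paper writes out the telescoping estimate $\zeta\boldsymbol{\cdot}(y-x)=(\zeta-\zeta_j)\boldsymbol{\cdot}(y-x)+\zeta_j\boldsymbol{\cdot}(y_j-x_j)+\zeta_j\boldsymbol{\cdot}\big(y-y_j-(x_j-x)\big)$ explicitly, whereas you pass to the limit directly by continuity of both sides of the inequality, which is equally valid.
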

\begin{proof}
Let the sequence $\{ (t_j,x_j,\zeta_j): (t_j,x_j)\in \text{graph}(C), \zeta_j\in N_{C(t_j)}(x_j), n \in \N\}  $ be given and suppose that
$(t_j,x_j,\zeta_j)\to (t,x,\zeta)$. By assumption, $x\in C(t)$. We wish to prove that $\zeta\in N_{C(t)}(x)$, namely that
\begin{equation}  \label{closgr}
\zeta \boldsymbol{\cdot} (y-x) \le \frac{1}{2r} \|\zeta\| \, \| y-x\|^2
\end{equation}
for all $y\in C(t)$.

Fix $y\in C(t)$. By the lower semicontinuity of $C$, for each $j\in\N$ there exists $y_j\in C(t_j)$ such that $y_j\to y$. Then we have
\[
\begin{split}
\zeta \boldsymbol{\cdot} (y-x) &= (\zeta-\zeta_j)\boldsymbol{\cdot} (y-x) + \zeta_j \boldsymbol{\cdot} (y_j-x_j) + 
\zeta_j \boldsymbol{\cdot} \big (y-y_j - (x_j-x)\big)\\
&\le \frac{1}{2r} \| \zeta_j\| \| y_j -x_j\|^2 + o(1). 
\end{split}
\]
By passing to the limit, the proof of \eqref{closgr} is concluded. The upper semicontinuity of $\tilde{F}$
is obtained by standard arguments of set-valued analysis (see, e.g., Corollary 1, p. 42, in \cite{AuCe}).
\end{proof}
A concept, related to prox-regularity (of the complement of a set) but weaker, is the $r$-\textit{internal sphere condition}. This amounts to requiring that
\eqref{ineqphi} holds true for \textit{some} nonvanishing normal vector $\zeta\in N_C(x)$, for all $x\in C$.
It is well known (see, e.g., \cite[Proposition 2.2.2]{CS}) that this condition
is equivalent to the semiconcavity of the distance function $d_C$ to $C$ up to the boundary of $C$, namely that for each 
$x,y\in \overline{(\R^n\setminus C)}$ and $\zeta\in \partial^P d_C(x)$ it holds
\begin{equation}\label{semiconc}
d_C(y)\le d_C(x) + \zeta \boldsymbol{\cdot} (y-x) + \frac{1}{2r} \| y-x\|^2.
\end{equation}

\section{The Dynamics}\label{Dynamic}
The following standing assumptions will be valid throughout the paper.
\begin{itemize}
\item[$(H_{C})$:] $C:[0,\infty)\leadsto \R^{n}$ is a set-valued map with the following properties:
	\begin{itemize}
	\item[$H_{C1})$:] for all $t\geq0$, $C(t)$ is nonempty and compact and there exists $r>0$ such that $C(t)$ is $r$-prox regular.
	\item[$H_{C2}):$] $C$ is Lipschitz continuous with constant $L_{C}$ (namely $d_{H}(C(t),C(s))\le L_C |t-s|$ for all $s,t\in [0,T]$, where
$d_{H}$ denotes the Hausdorff distance between subsets of $\R^n$).
	\end{itemize}
\item[$(H_{G})$:] $G:[0,\infty)\times\R^{n}\leadsto \R^{n}$ is a set-valued map with nonempty, closed and convex values, such that
	\begin{itemize}
	\item[$H_{G1})$:] there exists $M>0$ such that 
	$$ \|G(t,x)\| \leq M \qquad \forall \, (t,x)\in [0,\infty) \times \mathbb{R}^{n};$$
	\item[$H_{G2})$:] $G$ is Lipschitz continuous with constant $L_{G}$.
	\end{itemize}
\end{itemize}

The following result, a special case of a theorem due to Thibault \cite[Proposition 2.1 and Theorem 3.1]{Th03}, will be invoked 
to guarantee the well-posedness of the relevant Cauchy problem.

\begin{thm}\label{Thibault}
Let the set-valued maps $C$ and $G$ satisfy assumptions $(H_{C})$ and $(H_{G})$, respectively.
Then the Cauchy problem
\begin{equation}\label{PSP}
\left\{ \begin{array}{l}
\dot{x}(t)\,\in \,  -N_{C(t)}(x(t)) + G(t,x(t))\quad \mathrm{a.e.}\; t > t_{0},
\\
x(t_{0})=x_{0}\in C(t_{0})\;,
\\
x(t)\in C(t)\quad \mathrm{for}\;\mathrm{all} \quad t > t_{0},
\end{array}\right.
\end{equation}
admits solutions, and the solution set is closed w.r.t. the uniform convergence. 
Moreover, if $x$ is a solution of \eqref{PSP}, then it is a solution of the (unconstrained) problem
\begin{equation}\label{SP'}
\left\{ \begin{array}{l}
\dot{x}(t)\,\in \,  -(L_{C}+M)\,\partial d_{C(t)}(x(t))+ G(t,x(t))\quad \mathrm{a.e.}\; t > t_{0},
\\
x(t_{0})=x_{0}\in C(t_{0})\;.
\end{array}\right.
\end{equation}
\end{thm}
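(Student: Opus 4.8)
The statement packages Thibault's existence theory for perturbed sweeping processes together with a simple a priori estimate, so the plan is to (i) match our standing assumptions to the hypotheses of \cite[Prop.~2.1, Thm.~3.1]{Th03}, thereby getting existence and closedness of the solution set for free, and (ii) derive the representation \eqref{SP'} by bounding the normal component of $\dot x$.

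Step (i). Assumption $H_{C1}$ gives uniform $r$-prox-regularity of the values $C(t)$, $H_{C2}$ gives $L_C$-Lipschitz dependence on $t$, and $H_{G1}$--$H_{G2}$ say that $G$ is bounded by $M$, Lipschitz, with nonempty closed convex values; these are exactly the hypotheses of \cite{Th03}, so existence of a solution of \eqref{PSP} and closedness of the solution set under uniform convergence follow at once. For the reader's convenience I would recall the mechanism: a catching-up (Moreau--Yosida) time-stepping scheme, $x_{k+1}\in\mathrm{proj}_{C(t_{k+1})}\!\big(x_k+h\,g_k\big)$ with $g_k\in G(t_k,x_k)$, produces polygonal approximations whose velocities are bounded independently of the mesh size (essentially by $L_C+M$); Ascoli--Arzel\`a extracts a uniformly convergent subsequence, and the passage to the limit in the inclusion is the standard closure argument for differential inclusions with upper semicontinuous, compact, convex-valued right-hand side, the upper semicontinuity being precisely Proposition \ref{almostusc} applied to the truncation $N_{C(t)}(x)\cap\B$, together with convexity of the values of $G$. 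The same compactness-plus-closure argument, now applied to a uniformly convergent sequence of genuine solutions, yields closedness of the solution set.

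Step (ii). Let $x$ solve \eqref{PSP} and fix a $t>t_0$ at which $x$ is differentiable, writing $\dot x(t)=-n(t)+g(t)$ with $n(t)\in N_{C(t)}(x(t))$ and $g(t)\in G(t,x(t))$, so $\|g(t)\|\le M$. To bound $\|n(t)\|$ I would use a backward difference: since $x(t-h)\in C(t-h)$ and $d_H(C(t-h),C(t))\le L_C h$, choose $\hat x\in C(t)$ with $\|\hat x-x(t-h)\|\le L_C h$; applying \eqref{ineqphi} to the points $x(t),\hat x\in C(t)$ and $\zeta=n(t)$ gives $\langle n(t),\hat x-x(t)\rangle\le\frac{1}{2r}\|n(t)\|\,\|\hat x-x(t)\|^2$, and combining this with $\|\hat x-x(t)\|\le L_Ch+\|x(t-h)-x(t)\|$ and $x(t-h)=x(t)-h\dot x(t)+o(h)$, then dividing by $h$ and letting $h\to0^+$, one obtains $-\langle n(t),\dot x(t)\rangle\le L_C\|n(t)\|$. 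Substituting $\dot x(t)=-n(t)+g(t)$ gives $\|n(t)\|^2\le L_C\|n(t)\|+\langle n(t),g(t)\rangle\le(L_C+M)\|n(t)\|$, hence $\|n(t)\|\le L_C+M$. Finally, for $r$-prox-regular sets $\partial d_{C(t)}(y)=N_{C(t)}(y)\cap\B$ at every $y\in C(t)$ (see \cite{CT}), and $N_{C(t)}(y)$ is a cone, so $n(t)/(L_C+M)\in N_{C(t)}(x(t))\cap\B=\partial d_{C(t)}(x(t))$, i.e.\ $n(t)\in(L_C+M)\partial d_{C(t)}(x(t))$; therefore $\dot x(t)=-n(t)+g(t)\in-(L_C+M)\partial d_{C(t)}(x(t))+G(t,x(t))$ for a.e.\ $t>t_0$, which is \eqref{SP'}.

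The only genuinely delicate point --- once one does not merely quote \cite{Th03} --- is the passage to the limit in the existence proof: the right-hand side of \eqref{PSP} is unbounded and only closed-graph, not Lipschitz and not even upper semicontinuous, in $x$, so no direct compactness argument applies; the device that rescues it is exactly the a priori confinement of the normal component to $(L_C+M)\B$, which replaces $-N_{C(t)}(\cdot)+G$ by the bounded, upper semicontinuous, compact convex-valued map of \eqref{SP'}, to which the classical closure theorems for differential inclusions do apply.
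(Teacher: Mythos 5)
Your proposal is correct and matches the paper's treatment: the paper offers no proof of this theorem at all, simply invoking \cite[Proposition 2.1 and Theorem 3.1]{Th03}, which is exactly your Step (i). Your Step (ii) derivation of the a priori bound $\|n(t)\|\le L_C+M$ via the backward difference quotient and the prox-regularity inequality \eqref{ineqphi}, followed by the identification $N_{C(t)}(y)\cap\B=\partial d_{C(t)}(y)$ on $C(t)$, is a correct self-contained account of the content the paper delegates to Thibault.
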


As a side remark, we mention the fact that for each $v\in L^{1}_{loc}([t_{0},\infty), \R^{n})$ the Cauchy problem 
\begin{equation}\label{SPU}
\left\{ \begin{array}{l}
\dot{x}(t)\,\in \,  -N_{C(t)}(x(t)) + v(t)\quad \mathrm{a.e.}\; t > t_{0},
\\
x(t_{0})=x_{0}\in C(t_{0})\;,
\\
x(t)\in C(t)\quad \mathrm{for}\;\mathrm{all} \quad t > t_{0},
\end{array}\right.
\end{equation}
admits a unique solution. This follows from the (hypo)monotonicity of the normal cone to a prox-regular set and Gronwall's 
Lemma using well established arguments.

\section{Weak and Strong Invariance} \label{INV}

Suppose $C, G$ be satisfying  $(H_{C})$ and $(H_{G})$, respectively. Let  $K:[0,\infty) \leadsto \R^{n}$ be a set-valued 
map with closed graph such that $K(t)\subseteq C(t)$ for all $t\geq0$. We recall that closedness of $\mathrm{graph}(C)$ is a consequence of $(H_{C})$.

\begin{defn} {\bf (Weak invariance).} We say that  $K$ is weakly invariant with respect to the perturbed sweeping dynamics $-N_{C(t)}(x(t))+G(t,x(t))$ 
if and only if, for all $(t_{0},x_{0})\in \mathrm{graph}(C)\cap \mathrm{graph}(K)$, the Cauchy problem \eqref{PSP} admits, for some $T>0$, at least a solution 
$x:[t_{0}, T]\rightarrow \R^{n}$ such that $x(t)\in K(t)$ for all $t\in [t_{0}, T)$. 
\end{defn}

\begin{defn} {\bf (Strong invariance).} We say that  $K$ is strongly invariant with respect to the perturbed sweeping dynamics 
$-N_{C(t)}(x(t))+G(t,x(t))$ if and only if, for all $(t_{0},x_{0})\in \mathrm{graph}(C)\cap \mathrm{graph}(K)$, $T>0$ and $x:[t_{0}, T]\rightarrow \R^{n}$ 
solution to the Cauchy problem \eqref{PSP}, we have $x(t)\in K(t)$ for all $t\in [t_{0}, T]$. 
\end{defn}

Necessary and sufficient conditions for weak (also known as viability) and strong invariance are well known in the particular case 
where $C(t)\equiv \R^{n}$ for all $t$ (i.e., the normal part on the right hand side of \eqref{PSP} vanishes). Such a characterisation is 
stated both through tangency conditions (see, e.g., \cite{Aubin}) and through inequalities involving normal vectors (see, e.g., \cite{CLSW}). 
Our results will be expressed in terms of normals since we are interested in Hamilton-Jacobi inequalities.

We restrict our attention to the case in which $C$ is constant and state the relevant characterisation for an augmented 
system in order to be ready for the application to epi/hypograph of the minimum time function $T$,
which will appear in Section \ref{H-J_ineq_sec}.  
The first result deals with weak invariance and it is largely based on the existing theory.

\begin{thm} \label{wih}
Let the set-valued maps $C$ and $G$ satisfy assumptions $(H_{C})$ and $(H_{G})$, respectively. 
Take $K\subseteq \R^{n}$ closed and such that 
\[
C(t) \cap K \neq \emptyset \qquad \forall\; t\geq 0.
\]
For $t_{0}\geq 0$ and $x_{0} \in K \cap C(t_{0})$, consider the Cauchy problem
\begin{equation}\label{APSP}
\left\{ \begin{array}{l}
\dot{\tau}(t) = 1,
\\
\dot{x}(t)\,\in \,  -N_{C(\tau(t))}(x(t)) + G(\tau(t), x(t))\quad \mathrm{a.e.}\; t > 0,
\\
(\tau(0), x(0))= (t_{0}, x_{0})\;.
\end{array}\right.
\end{equation}
Set $\mathcal{K}:= [0,\infty) \times K$. Then $\mathcal{K}$ is weakly invariant for \eqref{APSP} if and only if, for every 
$(\tau,x)\in \mathrm{graph}(C)\cap \mathcal{K}$, we have
\begin{equation}
\label{Hmeno}
\min_{v\in\{0\}\times \{ -N_{C(\tau)}(x)\cap(L+M)\B\}} v\boldsymbol{\cdot} p + \min_{v\in \{1\} \times G(\tau,x)} v\boldsymbol{\cdot} p \leq 0 \qquad 
\forall \; p\in N^{P}_{\mathrm{graph}(C)\cap \mathcal{K}}(\tau,x). 
\end{equation}
\end{thm}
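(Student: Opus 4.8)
\emph{Proof plan.} The plan is to eliminate the normal cone from the right hand side of \eqref{APSP} by means of Theorem \ref{Thibault}, reducing \eqref{Hmeno} to the classical proximal characterisation of weak invariance (viability) for a differential inclusion with upper semicontinuous, compact and convex values.

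First I would introduce the state--unconstrained inclusion
\[
(\dot\tau,\dot x)\in\Phi(\tau,x):=\{1\}\times\Big(-(L+M)\,\partial d_{C(\tau)}(x)+G(\tau,x)\Big)
\]
on $[0,\infty)\times\R^n$ (here $L=L_C$, so that $L+M$ is the constant appearing in \eqref{SP'} and \eqref{Hmeno}), together with the closed set $\mathcal S:=\mathrm{graph}(C)\cap\mathcal K=\{(\tau,x):x\in C(\tau)\cap K\}$. One checks that $\Phi$ has nonempty, compact and convex values, is bounded, and --- since $C$ and $G$ are continuous in $(\tau,x)$ --- has closed graph, hence is upper semicontinuous. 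The only non-routine point is the closedness of the graph of $(\tau,x)\mapsto\partial d_{C(\tau)}(x)$ on the whole space: for $x\in C(\tau)$ this follows from Proposition \ref{almostusc} together with the prox-regularity identity $\partial d_{C(\tau)}(x)=N_{C(\tau)}(x)\cap\B$, and for $x\notin C(\tau)$ from continuity of $C$ and uniqueness of the metric projection onto $r$-prox-regular sets (see \cite{CT}). Consequently $\Phi$ is an upper semicontinuous, bounded, compact- and convex-valued map, and the standard viability theory applies: $\mathcal S$ is weakly invariant for $(\dot\tau,\dot x)\in\Phi(\tau,x)$ if and only if
\[
\min_{v\in\Phi(\tau,x)}v\boldsymbol{\cdot}p\le0\qquad\forall\,(\tau,x)\in\mathcal S,\ \forall\,p\in N^P_{\mathcal S}(\tau,x),
\]
see, e.g., \cite[Chapter 4]{CLSW} or \cite{Aubin}.

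Next I would match the two Hamiltonians. On $\mathcal S$ one has $\partial d_{C(\tau)}(x)=N_{C(\tau)}(x)\cap\B$, hence $(L+M)\,\partial d_{C(\tau)}(x)=N_{C(\tau)}(x)\cap(L+M)\B$ because $N_{C(\tau)}(x)$ is a (convex) cone; and, since the summand in $-N_{C(\tau)}(x)\cap(L+M)\B$ and the summand in $G(\tau,x)$ vary independently,
\[
\min_{v\in\Phi(\tau,x)}v\boldsymbol{\cdot}p=\min_{v\in\{0\}\times(-N_{C(\tau)}(x)\cap(L+M)\B)}v\boldsymbol{\cdot}p+\min_{v\in\{1\}\times G(\tau,x)}v\boldsymbol{\cdot}p,
\]
which is precisely the left hand side of \eqref{Hmeno}. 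Thus \eqref{Hmeno} is equivalent to weak invariance of $\mathcal S$ for $\Phi$, and it remains to identify the latter with weak invariance of $\mathcal K$ for \eqref{APSP}. If \eqref{Hmeno} holds, weak invariance of $\mathcal S$ produces, from each $(t_0,x_0)\in\mathcal S$, a trajectory $(\tau,x)$ of $\Phi$ staying in $\mathcal S$; along it $x(t)\in C(\tau(t))$, so $-(L+M)\,\partial d_{C(\tau)}(x)=-\big(N_{C(\tau)}(x)\cap(L+M)\B\big)\subseteq-N_{C(\tau)}(x)$, whence $(\tau,x)$ solves \eqref{APSP} while remaining in $\mathcal K$. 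Conversely, if $\mathcal K$ is weakly invariant for \eqref{APSP}, a corresponding solution $(\tau,x)$ issued from $(t_0,x_0)\in\mathcal S$ satisfies $x(t)\in C(\tau(t))\cap K$, hence lies in $\mathcal S$ on $[t_0,T)$ and, by closedness of $\mathcal S$ and continuity, on $[t_0,T]$; by Theorem \ref{Thibault} this $(\tau,x)$ also solves $(\dot\tau,\dot x)\in\Phi(\tau,x)$, so $\mathcal S$ is weakly invariant for $\Phi$.

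I expect the main obstacle to be the verification that $\Phi$ --- equivalently the reflecting term $-(L+M)\,\partial d_{C(\tau)}(\cdot)$ --- genuinely falls within the scope of the classical viability theorem, i.e. upper semicontinuity of the proximal-normal part across $\mathrm{bdry}\,C(\tau)$ uniformly in $\tau$; this is exactly where the prox-regularity hypothesis in $(H_C)$ and Proposition \ref{almostusc} are used, together with the identity $\partial d_{C(\tau)}(x)=N_{C(\tau)}(x)\cap\B$ on $C(\tau)$. Everything else is bookkeeping with the definitions and the cone structure of $N_{C(\tau)}(x)$, together with the half-open-versus-closed-interval and maximal-extension technicalities, which are harmless because $\Phi$ is bounded.
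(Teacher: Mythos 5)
Your proposal is correct and follows essentially the same route as the paper: replace the normal-cone term by $-(L+M)\,\partial d_{C(\tau)}(x)$ via Theorem \ref{Thibault} and then invoke the classical proximal characterisation of weak invariance from \cite{CLSW}. The only difference is that you spell out the verifications (upper semicontinuity of the reflected dynamics, the identity $(L+M)\,\partial d_{C(\tau)}(x)=N_{C(\tau)}(x)\cap(L+M)\B$ on $C(\tau)$, the splitting of the minimum over the Minkowski sum, and the back-and-forth between solutions of the two inclusions) that the paper leaves implicit.
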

\begin{proof}
According to Theorem \ref{Thibault}, \eqref{APSP} is equivalent to the Cauchy problem with bounded right hand side  
\[ 
\left\{ \begin{array}{l}
\dot{\tau}(t) = 1,
\\
\dot{x}(t)\,\in \,  -(L+M)\partial d_{C(\tau(t))}(x(t)) + G(\tau(t), x(t))\quad \mathrm{a.e.}\; t > 0,
\\
(\tau(0), x(0))= (t_{0}, x_{0})\;.
\end{array}\right.
\]
Applying the characterisation of weak invariance proved in \cite[Theorem 2.10]{CLSW} then \eqref{Hmeno} follows. The proof is concluded.
\end{proof}

The second result of this section deals with strong invariance and is new. Indeed, observe that the right hand side of \eqref{APSP} 
does not satisfy the one-sided Lipschitz continuity assumption which is imposed in \cite{DoRiWo} in relation with 
strong invariance. Here, taking advantage of the special structure of the dynamics we consider, we are able to characterize 
strong invariance by means of a {\em single} Hamiltonian inequality. In the more general strong invariance result contained in \cite{DoRiWo},
instead, the authors use two different inequalities, one for the necessity condition and one for the sufficiency condition.

The notations of Theorem \ref{wih} are adopted also in the following result.

\begin{thm}
\label{sih} Let the assumptions and notations of Theorem \ref{wih} hold true. Then $\mathcal {K}\cap \mathrm{graph}(C) $ is strongly 
invariant for \eqref{APSP} if and only if,  for every $(\tau,x)\in \mathrm{graph(C)}\cap \mathcal{K}$, we have
\begin{equation}
\label{Hpiu}
\min_{v\in\{0\}\times \{ -N_{C(\tau)}(x)\cap(L+M)\B\}} v\boldsymbol{\cdot} p + \max_{v\in \{1\} \times G(\tau,x)} v\boldsymbol{\cdot} p \leq 0 \qquad 
\forall \; p\in N^{P}_{\mathrm{graph}(C)\cap \mathcal{K}}(\tau,x). 
\end{equation}
\end{thm}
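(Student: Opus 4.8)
The plan is to reduce everything, via Theorem~\ref{Thibault}, to the \emph{bounded} reaction: every solution $z=(\tau(\cdot),x(\cdot))$ of \eqref{APSP} satisfies $\dot x(t)\in-(L+M)\partial d_{C(\tau(t))}(x(t))+G(\tau(t),x(t))$ a.e., and, as $z(t)\in\mathrm{graph}(C)$ by definition of solution, $\partial d_{C(\tau(t))}(x(t))=N_{C(\tau(t))}(x(t))\cap\B$, so $\dot x(t)=-\zeta(t)+g(t)$ with $\zeta(t)\in N_{C(\tau(t))}(x(t))\cap(L+M)\B$ and $g(t)\in G(\tau(t),x(t))$. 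Since $\mathcal A:=\mathcal K\cap\mathrm{graph}(C)$ is then a \emph{fixed} set, it suffices to characterise its strong invariance for this bounded inclusion. Note that the left hand side of \eqref{Hpiu} is exactly ``lower Hamiltonian of the sweeping part $+$ upper Hamiltonian of the $G$ part'': the content of the theorem is that this \emph{mixed} Hamiltonian, and not a pair as in \cite{DoRiWo}, is the right object, the reason being that the sweeping reaction is not a control — for a fixed measurable selection of $G$ the sweeping process has a unique solution, cf.~\eqref{SPU} — whereas $G$ must be dominated along all of its selections.

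\emph{Necessity.} Fix $(\tau,x)\in\mathcal A$, $p=(p_0,p_x)\in N^P_{\mathcal A}(\tau,x)$, and $g^*\in G(\tau,x)$ with $\langle p_x,g^*\rangle=\max_{v\in G(\tau,x)}\langle p_x,v\rangle$. Choose a continuous selection $\hat g$ of $G$ with $\hat g(\tau,x)=g^*$ (e.g.\ the metric projection of $g^*$ onto $G(\cdot,\cdot)$) and let $z(\cdot)$ be a solution issuing from $(\tau,x)$ of the sweeping process with perturbation $\hat g$ (existence by a continuous-perturbation version of Theorem~\ref{Thibault}). By strong invariance $z(t)\in\mathcal A$, so the proximal normal inequality along $z(\cdot)$ gives $\langle p,v\rangle\le0$ for every cluster point $v$ of $(z(t)-z(0))/t$ as $t\downarrow0$. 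Averaging $\dot z=(1,-\zeta(\cdot)+\hat g(\tau(\cdot),x(\cdot)))$ and using the continuity of $\hat g$, the upper semicontinuity of $N_{C(\cdot)}(\cdot)\cap\B$ (Proposition~\ref{almostusc}), and the convexity and closedness of $N_{C(\tau)}(x)$ (prox-regularity), every such $v$ has the form $(1,g^*-\zeta^*)$ with $\zeta^*\in N_{C(\tau)}(x)\cap(L+M)\B$. Hence $p_0+\langle p_x,g^*\rangle-\langle p_x,\zeta^*\rangle\le0$, and since $-\zeta^*$ lies in the admissible set of the first minimum in \eqref{Hpiu}, \eqref{Hpiu} follows.

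\emph{Sufficiency.} Assume \eqref{Hpiu} and let $z(\cdot)=(\tau(\cdot),x(\cdot))$ be a solution with $z(t_0)\in\mathcal A$; since $z(t)\in\mathrm{graph}(C)$ automatically it suffices to prove $x(t)\in K$ for all $t$. Put $\rho(t):=d_{\mathcal A}(z(t))$ (so $\rho(t_0)=0$) and, for a.e.\ $t$, estimate $\tfrac{d}{dt}\tfrac12\rho(t)^2\le\langle p(t),\dot z(t)\rangle$, where $p(t)=z(t)-\bar z(t)=(p_0(t),p_x(t))$, $\bar z(t)=(\bar\tau(t),\bar x(t))\in\mathcal A$ is a nearest point, $\|p(t)\|=\rho(t)$, $p(t)\in N^P_{\mathcal A}(\bar z(t))$. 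Applying \eqref{Hpiu} at $\bar z(t)$ with the normal $p(t)$: the clock term $p_0(t)$ is eliminated, the $G$-term $\langle p_x(t),g(t)\rangle-\max_{v\in G(\bar z(t))}\langle p_x(t),v\rangle$ is $O(\rho(t)^2)$ by Lipschitz continuity of $G$, and the remaining sweeping term becomes $\langle p_x(t),\eta^*(t)-\zeta(t)\rangle$ with $\eta^*(t)\in N_{C(\bar\tau(t))}(\bar x(t))\cap(L+M)\B$ maximizing $\langle p_x(t),\cdot\rangle$ — it is here that the \emph{minimum} over the normal cone in \eqref{Hpiu} is used, to compare the actual reaction $\zeta(t)$ against the selection $0$. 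When $\bar\tau(t)=\tau(t)$ this term is $O(\rho(t)^2)$ by the prox-regularity inequality \eqref{ineqphi} (applied at $x(t)$ and at $\bar x(t)$, both in $C(\tau(t))$). Granting it to be $O(\rho(t)^2)$ in general, one obtains $\tfrac{d}{dt}\rho(t)^2\le c\,\rho(t)^2$ a.e.\ with $\rho(t_0)=0$, hence $\rho\equiv0$.

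The technical heart, and the step I expect to be the main obstacle, is that $\bar\tau(t)$ need not equal $\tau(t)$: then $\eta^*(t)$ is a normal to $C$ at the instant $\bar\tau(t)$ while $\zeta(t)$ is a normal at $\tau(t)$, and a naive use of \eqref{ineqphi} leaves in $\langle p_x(t),\eta^*(t)-\zeta(t)\rangle$ a term \emph{linear} in $\|p(t)\|$ — the Hausdorff displacement of $C$ over $[\bar\tau(t),\tau(t)]$ — which Gronwall's lemma cannot absorb. Removing it requires the same balance $L=L_C$ that makes Theorem~\ref{Thibault} work: since $-(L_C+M)\partial d_{C(\tau)}$ dominates both the speed $L_C$ of $C$ and the bound $M$ on $G$, the motion of $C$ contributes nonpositively to $\tfrac{d}{dt}\rho^2$ modulo quadratic terms; equivalently one must choose $\bar z(t)$ — or replace $d_{\mathcal A}$ by the distance to the moving slice $K\cap C(\tau(t))$ together with a suitable comparison velocity for it — so that this linear term cancels against the reaction part of $\dot x(t)$. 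Capturing this cancellation is precisely what, in contrast to \cite{DoRiWo}, merges the necessity and sufficiency Hamiltonians into the single inequality \eqref{Hpiu}.
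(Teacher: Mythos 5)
Your necessity argument is sound and is essentially the paper's: strong invariance for the full inclusion implies weak invariance of $\mathcal K$ for the sweeping process driven by a continuous selection of $G$ passing through the maximizer, and the resulting proximal normal inequality is exactly \eqref{Hpiu}. Whether one then quotes the weak-invariance characterization of Theorem \ref{wih} (as the paper does) or re-derives it by hand via difference quotients and the upper semicontinuity of $(\tau,x)\mapsto N_{C(\tau)}(x)\cap\B$ (as you do) is immaterial.

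The sufficiency argument, however, has a genuine gap, and it is precisely the one you flag yourself. Projecting $z(t)$ onto the fixed set $\mathcal A:=\mathcal K\cap\mathrm{graph}(C)$ yields a foot point $\bar z(t)=(\bar\tau(t),\bar x(t))$ with, in general, $\bar\tau(t)\neq\tau(t)$; comparing the actual reaction $\zeta(t)\in N_{C(\tau(t))}(x(t))$ with normals of $C(\bar\tau(t))$ at $\bar x(t)$ then produces, through the Lipschitz motion of $C$, a term of order $LL_C|\tau(t)-\bar\tau(t)|$, which is \emph{linear} in $\rho(t)$ and cannot be absorbed by Gronwall applied to $\rho^2$. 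Writing ``granting it to be $O(\rho^2)$'' leaves the heart of the theorem unproved, and neither of your suggested repairs works as stated: there is no choice of nearest point that cancels a first-order term, and replacing $d_{\mathcal A}$ by the distance to the moving slice $K\cap C(\tau(t))$ both destroys the regularity of $t\mapsto\rho(t)$ (the slice $K\cap C(t)$ need not vary continuously in $t$, since $K$ is an arbitrary closed set) and produces normals to the slice, to which hypothesis \eqref{Hpiu} --- stated only for proximal normals to $\mathrm{graph}(C)\cap\mathcal K$ --- does not apply. The paper's resolution is structurally different: instead of projecting the trajectory onto $\mathcal A$, it builds from the given trajectory an auxiliary multifunction $\tilde G(t,\tau,x)\subset\{1\}\times\big(-N_{C(\tau)}(x)\cap L\B+G(\tau,x)\big)$ whose defining inequality \eqref{Gtilde_def} is a one-sided-Lipschitz estimate relative to $(\tau(t),x(t))$, with the unavoidable linear term $2LL_C|\tau-\tau(t)|$ kept explicitly; it checks that \eqref{Hpiu} makes $\tilde G$ satisfy the weak-invariance Hamiltonian condition on $\mathcal A$, and invokes a weak-invariance theorem to produce a second trajectory $(\tilde\tau,\tilde x)$ remaining in $\mathcal A$. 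Because both trajectories carry the same clock ($\dot\tau=\dot{\tilde\tau}=1$ with the same initial time), the linear term vanishes identically along the pair, the surviving estimate is genuinely quadratic in $\|x-\tilde x\|$, and Gronwall forces $x\equiv\tilde x$, so the original trajectory lies in $\mathcal A$. This ``shadow trajectory with synchronized time'' device is the idea missing from your proposal.
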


\begin{proof} \textbf{(Necessity)}. Fix $(\bar{\tau},\bar{x})\in \mathrm{graph}(C) \cap \mathcal{K}$ and  $\bar{p} \in 
N^{P}_{ \mathrm{graph}(C)\cap\mathcal{K}}(\bar{\tau},\bar{x})$. Take $\bar{v}\in \{1\}\times G(\bar{\tau}, \bar{x})$ such that
\[
\bar{v}\boldsymbol{\cdot} \bar{p} = \max_{v\in  \{1\}\times G(\bar{\tau},\bar{x}) } v\boldsymbol{\cdot} \bar{p}.
\]
Let $\phi : \mathrm{graph}(C)\rightarrow \R^{n}$ be a continuous selection from $G(\tau,x)$ such that 
$\phi(\bar\tau,\bar{x})=\bar{v}$ (such a selection exists by stabdard arguments of set-valued analysis, see, e.g., 
Theorem 1, p. 82, and the beginning of the proof of Corollary 1, p. 83, in \cite{AuCe}. 
Recall that the Cauchy problem
\begin{equation}\label{RSP}
\left\{ \begin{array}{l}
\dot{\tau}(t) = 1
\\
\dot{x}(t)\,\in \,  -N_{C(\tau(t))}(x(t)) + \phi(\tau(t), x(t))\quad \mathrm{a.e.}\; t > 0,
\\
(\tau(0), x(0))= (t_{0}, x_{0})\;
\end{array}\right.
\end{equation}
admits solutions. From the strong invariance of $\mathcal{K}$ with respect to \eqref{APSP}, $\mathcal{K} $ is also weakly 
invariant with respect to \eqref{RSP}. Thus we obtain from Theorem \ref{wih} that
\[
\min_{v\in\{0\}\times \{ -N_{C(\tau)}(x)\cap(L+M)\B\}} v\boldsymbol{\cdot} p + \phi(\tau,x)\boldsymbol{\cdot} p \leq 0 \qquad \forall \; 
p\in N^{P}_{\mathrm{graph}(C)\cap \mathcal{K}}(\tau,x)
\]
for all $(\tau,x)\in \mathrm{graph}(C)\cap \mathcal{K}$. In particular, by taking $(\tau,x)=(\bar{\tau}, \bar{x})$ and $p=\bar{p}$, we obtain
\[
\min_{v\in\{0\}\times \{ -N_{C(\bar{\tau})}(\bar{x})\cap(L+M)\B\}} v\boldsymbol{\cdot} \bar{p} + \max_{v\in \{1\} \times 
G(\bar{\tau},\bar{x})} v\boldsymbol{\cdot} \bar{p} \leq 0 .
\]
Since $(\bar{\tau}, \bar{x})$ and $\bar{p}\in N^{P}_{\mathrm{graph}(C)\cap \mathcal{K}}(\bar{\tau}, \bar{x})$ 
were arbitrary chosen, the first part of the proof is completed.

\textbf{(Sufficiency)}. Take $T>0$ and a solution $(\tau,x):[0,T]\rightarrow \R^{1+n}$ of \eqref{APSP} with $(t_{0},x_{0}) 
\in \mathcal{K}\cap\mathrm{graph}(C)$. We aim at proving that $(\tau,x)(t)\in \mathcal{K}$ for each $t\in [0,T]$.

To this aim, observe first that the structure of the right hand side of \eqref{APSP} implies through standard selection theorems
(see, e.g., Proposition 4, p. 43, and Theorem 1, p. 90, in \cite{AuCe}) that there exist measurable functions 
$\xi, g : [0, T]\rightarrow \R^{n}$ such that, for a.e. $t\in [0,T]$,
$$ \xi(t)\in N_{C(\tau(t))}(x(t))\cap (L_{C}+M)\B,\qquad g(t)\in G(\tau(t),x(t)),$$
$$ \dot{x}(t) = -\xi(t)+ g(t).$$
Define 
\begin{equation}\label{defL}
L:= L_{C}+M,
\end{equation}
and, for $(\tau,x)\in \mathrm{graph}(C)$, 
\begin{equation} \label{def_F}
F(\tau,x) := \{  1\} \times \big(-N_{C(\tau)}(x)\cap L \B+G(\tau,x)\big),
\end{equation}
and, for a.e. $t\in [0,T]$,
\begin{equation} \label{Gtilde_def}
\begin{split}
\tilde{G}(t,\tau,x)&:= \Big\{ v\in F(\tau,x) :  \big((\dot\tau(t), \dot{x}(t))-v\big)\boldsymbol{\cdot} \big((\tau(t),x(t))-(\tau,x)\big) \leq \frac{2L}{r}\|x(t)-x\|^{2}+
\\
 &\qquad + \frac{2LL_C^2}{r}|\tau(t)-\tau|^{2}+L_{G}\|(\tau(t),x(t))-(\tau,x)\|\, \|x(t)-x\| +
 2LL_{C}|\tau-\tau(t)|\Big\}
 \end{split}
\end{equation}
We claim that the set-valued map $\tilde{G}$ has the following properties: 
\begin{itemize}
\item[ (i) ] $\emptyset\neq \tilde{G}(t,\tau, x) \subseteq F(\tau,x)$ for a.e. $t\in [0,T]$, for all $(\tau,x)\in \mathrm{graph}(C)$;
\item[ (ii) ] $\min_{v\in \tilde{G}(t,\tau,x)} v\boldsymbol{\cdot} p \leq 0 $ for all $(\tau,x)\in \mathrm{graph}(C)\cap \mathcal{K}$, 
a.e. $t\in [0,T]$, $p\in N^{P}_{\mathrm{graph}(C)\cap \mathcal{K}}(\tau,x)$;
\item[ (iii) ] $\tilde{G}$ is almost upper semicontinuous and takes values compact and convex sets. 
\end{itemize}
In order to see the non-emptiness of $\tilde{G}$, we establish first some inequalities. 
Fix $t\in[0,T]$ such that $\dot{x}(t)$ exists and take $(\tau,x)\in \mathrm{graph}(C)$. 
First of all, using the Lipschitz continuity of $G$, it follows that there exists $\bar{g}\in G(\tau,x)$ such that 
\begin{equation} \label{def_g}
\|g(t)-\bar{g}\| \leq L_{G} \| (\tau(t), x(t)) - (\tau,x)\| .
\end{equation}  
Using now the Lipschitz continuity of $C$, we can find $y\in C(\tau(t))$ and $z\in C(\tau)$ such that 
\begin{equation} \label{def_y}
\|x-y\| \leq L_{C} |\tau - \tau(t)|,
\end{equation}
\begin{equation} \label{def_z}
\|x(t) -  z \| \leq L_{C} |\tau(t)-\tau| .
\end{equation}
By taking into account the uniform prox-regularity of $C$ (see \eqref{ineqphi}) and \eqref{def_y}, we obtain
\begin{equation} \label{ineq_2}
\begin{split}
-\xi(t)\boldsymbol{\cdot} (x(t) -x) & = \xi(t)\boldsymbol{\cdot} (y - x(t)) + \xi(t)(x-y) \leq
\\
 & \leq \frac{1}{2r}\|\xi(t)\|\, \|x(t) - y\|^{2} + \|\xi(t)\| L_{C} | \tau - \tau(t)|
\\
 & \leq \frac{L}{2r}\|x(t)-y\|^{2}+LL_{C}|\tau - \tau(t)|.
\end{split}
\end{equation}
By arguing as to obtain \eqref{ineq_2}, for every $\xi \in N_{C(\tau)}(x)\cap L\B$, recalling  \eqref{ineqphi} and \eqref{def_z}
we obtain as well
\begin{equation} \label{ineq_3}
\xi \boldsymbol{\cdot} (x(t)-x) \leq \frac{L}{2r}\|x-z\|^{2} + LL_{C}|\tau(t)-\tau|.
\end{equation}
Set $\bar{v} = (1, -\xi + \bar{g})$, where $\xi$ is an arbitrary element of the set $N_{C(\tau)}(x)\cap L\B$. 
By taking into account \eqref{def_g}, \eqref{ineq_2}, and \eqref{ineq_3}, we obtain

\[
\begin{split}
\big( (\dot\tau(t), \dot{x}(t))&-\bar{v}\big)\boldsymbol{\cdot} \big( (\tau(t),x(t))-(\tau,x)\big) = 
\big((1, -\xi(t)+g(t))-(1,-\xi+\bar{g})\big)\boldsymbol{\cdot} (\tau(t)-\tau, x(t)-x)\\
&=(-\xi(t) + \xi +g(t) - \bar{g}) \boldsymbol{\cdot} (x(t) - x)
\\
&\leq \frac{L}{2r}\|x(t) - y\|^{2}+ LL_{C}|\tau-\tau(t)| + \frac{L}{2r}\|x-z\|^{2} + LL_{C}|\tau(t)-\tau| 
\\
&\qquad +L_{G}\|(\tau(t),x(t)) - (\tau,x))\| \, \|x(t)-x\|
\\
&\leq \frac{L}{r}\big( \|x(t) - x\|^{2} + \|x-y\|^{2}\big) + \frac{L}{r}\big( \|x(t)-x\|^{2}+\|x(t)-z\|^{2}\big) + 2LL_{C}|\tau - \tau(t)|
\\
&\qquad + L_{G}\|(\tau(t),x(t)) - (\tau,x)\| \, \|x(t)-x\| 
\\
&\leq \frac{2L}{r} \|x(t)-x\|^{2} + \frac{2LL^2_{C}}{r}|\tau(t)-\tau|^{2} + 2LL_{C}|\tau - \tau(t)|\\
&\qquad  + L_{G}\|(\tau(t),x(t)) - (\tau,x)\|\, \|x(t)-x\|,
\end{split}
\]

which shows that $\bar{v}\in \tilde{G}(t,\tau,x)$. 
Furthermore, property (ii) follows immediately from \eqref{Hpiu}, while (iii) is easily checked.
Thus the claim is confirmed.

Observe that in the above argument the only condition imposed on $\xi$ was $\xi \in N_{C(\tau)}(x)\cap L\B$. 
Then, given $p\in N_{\mathrm{graph}(C)\cap \mathcal{K}}(\tau,x),$ we can choose
$$ -\bar{\xi} \in \underset{v\in \{0\} \times \{ -N_{C(\tau)}(x)\cap L\B\}}{\mathrm{argmin}} v\boldsymbol{\cdot} p.$$
Therefore, by taking $\bar{g}$ as in \eqref{def_g}, we have
$$ \min_{v\in \tilde{G}(t,\tau,x)} v\boldsymbol{\cdot} p \leq (1, -\bar{\xi}+\bar{g})\boldsymbol{\cdot} p \leq \min_{v\in \{0\}\times 
\{-N_{C(\tau)(x)\cap L\B}\}} v\boldsymbol{\cdot} p + \max_{\{1\} \times G(\tau,x)} v\boldsymbol{\cdot} p \leq 0,$$
where last inequality holds true by hypothesis. 
Thus, by a known weak flow invariance result (see, e. g., Theorem 1 in \cite{DoRiWo}), the Cauchy problem
\[ 
\begin{cases}
(\dot{\tau}(t),\dot{x}(t)) \!\!&\in\; \tilde{G}(t,\tau(t),x(t))
\\
(\tau(0), x(0))\!\!&=\; (t_{0}, x_{0})\;
\end{cases}
\]
admits a solution $(\tilde{\tau}, \tilde{x}):[0,\tilde{T}]\rightarrow \R^{1+n}$ such that 
$(\tilde{\tau}(t),\tilde{x}(t))\in \mathrm{graph}(C)\cap \mathcal{K}$ for all $t\in [0, \tilde{T}]$. 
Set $T'= \min \{ T, \tilde{T}\}$. We claim that $(\tau(t), x(t))=(\tilde{\tau}(t), \tilde{x}(t))$ for all 
$t\in [0, T']$. Indeed, since $(\tilde{\tau}, \tilde{x})$ is a $\tilde{G}$-trajectory, by taking 
$v = (\dot{\tilde{\tau}}(t), \dot{\tilde{x}}(t))$ and $(\tau,x)=(\tilde{\tau}(t), \tilde{x}(t))$ in \eqref{Gtilde_def}, 
we obtain for a.e. $t\in [0,T']$
\begin{align}
\big((\dot{\tau}(t),  \dot{x}(t))   - (\dot{\tilde{\tau}}(t), \dot{\tilde{x}}(t))\big) & \boldsymbol{\cdot} \big((\tau(t),x(t)) - (\tilde{\tau}(t), \tilde{x}(t))\big)
 \leq\frac{2L}{r} \|x(t) - \tilde{x}(t)\|^{2} + \frac{2LL_{C}^2}{r}|\tau(t) - \tilde{\tau}(t)|\nonumber
\\
& \qquad + L_{G}\|(\tau(t),x(t)) - (\tilde{\tau}(t), \tilde{x}(t))\|\, \| x(t) - \tilde{x}(t) \|\\
& \qquad + 2LL_{C}|\tau(t) - \tilde{\tau}(t)|.\nonumber
\end{align}
Observe that $\tau(0)=\tilde{\tau}(0)=t_{0}$ and $\dot{\tau}(t) = \dot{\tilde{\tau}}(t)=1$, so that $\tau(t)\equiv \tilde{\tau}(t)$. 
Therefore the above inequality yields
$$(\dot{x}(t)-\dot{\tilde{x}}(t))\boldsymbol{\cdot} (x(t)-\tilde{x}(t)) \leq \left(\frac{2L}{r}+L_{G}\right)\|x(t)-\tilde{x}(t)\|^{2}.$$
Since $x(0)=\tilde{x}(0)=x_{0}$, it follows from Gronwall's Lemma that $x(t)=\tilde{x}(t)$ in $[0,T']$. 
If $T'<T$, we can repeat the same arguments starting from the point $(T',x(T'))$ in place of $(t_{0},x_{0})$. 
This proves that $(\tau(t),x(t))\in \mathcal{K}\cap \mathrm{graph}(C)$ for all $t\in[0,T]$. 
Since the trajectory $(\tau,x)$ was arbitrarily chosen, the proof is complete.
\end{proof}
\begin{rem}
Observe that in the Hamiltonian characterization of strong invariance \eqref{Hpiu}, the summand relative to $N_{C(t)}$ is minimised
and not maximised. Notice that the maximum of $p\boldsymbol{\cdot}\xi$ with $\xi\in -N_{C(t)}(x)$ is nonnegative, since  $0\in -N_{C(t)}(x)$. 
Therefore taking the maximum in condition (4.3) would imply that all the vectors of $G(t,x)$ point inward with respect to $C(t)$.
Taking the minimum in (4.3) reflects the features of the perturbed sweeping process solutions $x(\cdot)$ with a 
perturbation $G(t,x)$ which does not satisfy inward pointing conditions on $\partial C(t)$. In this case, a velocity $g\in G(t,x(t))$ 
could push the trajectory out from the set $C(t)$ 
and the action of a correcting term $\xi\in -N_{C(t)}(x)$ is required. The minimisation on $\xi\in -N_{C(t)}(x)$ 
takes into account the presence of such a correcting term.
\end{rem}

\section{Invariance properties and the value function}\label{DP}

The sweeping process, and its perturbed version as well, is by its very nature a time varying dynamics. 
For this reason we will consider the minimum time function as depending also on the initial time.
\begin{defn}
\label{Minimum_Time}
Let the multifunction $C$ and $G$ be given as in Section \ref{Dynamic} and consider the Cauchy problem \eqref{APSP}. 
Let $S\subset\mathrm{\R}^{n}$ be closed. The minimum time to reach $S$ subject to \eqref{APSP} is 
\begin{equation} \label{MT}
T(t_{0},x_{0})  = \inf \{ \alpha \geq 0 : x(\alpha)\in S, \; \text{$(\tau,x)$ solution of \eqref{APSP}} \}.
\end{equation}
\end{defn}
Observe that an obvious necessary condition for $T(t_{0},x_{0})$ to be finite is 
\begin{equation}\label{non-empty}
 S\cap C(t) \neq \emptyset \qquad \mathrm{for}\quad\mathrm{some}\quad t\geq t_{0}.
\end{equation}
It is well known that the epigraph of the value function satisfies some forward and backward invariance properties 
(see, e.g. \cite{Fr}, \cite{WoZh}). Since we are able to treat only continuos minimum time functions, we will substitute the 
backward invariance of the epigraph with the forward one for the hypograph. Our setting differs from the classical 
autonomous framework and so we state and prove in details the invariance properties of the value function. 
Essentially, we follow arguments of \cite[Section 3]{WoZh}.

We set
\begin{equation}\label{Epigraph}
E:=\mathrm{epi} (T)=\{ (t,x,\lambda): \; (t,x)\in \mathrm{graph}(C), \; \lambda\geq T(t,x) \}
\end{equation}
and
\begin{equation}
H:=\mathrm{hypo} (T)=\{ (t,x,\lambda): \; (t,x)\in \mathrm{graph}(C), \; \lambda\leq T(t,x) \}.
\end{equation}
\begin{prop} \label{WI}
Let the set-valued maps $C$ and $G$ satisfy assumptions $(H_{C})$ and $(H_{G})$, respectively, and let $S\subseteq \R^n$ be closed. 
Consider the minimum time function $T$ to reach $S$ subject to \eqref{APSP} and assume that 
$T:\mathrm{graph}(C)\rightarrow [0,\infty]$ is lower semicontinuous and not identically equal to $+ \infty$. 
Then $E$, the epigraph of $T$, is weakly invariant with respect to the  dynamics generated by the 
velocity set
\begin{equation} \label{Gamma_def}
\Gamma(\tau,x,\lambda):=\{ 1\} \times \{ -N_{C(\tau)}(x)+G(\tau,x) \} \times \{-1 \}
\end{equation}
for $(\tau,x)\in \mathrm{graph}(C)$, $\lambda\in \R$.
\end{prop}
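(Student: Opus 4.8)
The plan is to follow the dynamic--programming scheme used in \cite[Section~3]{WoZh}: given an arbitrary point of $E$, I exhibit one trajectory of the augmented inclusion $\dot z\in\Gamma(z)$, $z=(\tau,x,\lambda)$ (see \eqref{Gamma_def}), which stays in $E$ for a positive time, and this trajectory is obtained by attaching the countdown $\lambda(t)=\lambda_0-t$ to a \emph{time--optimal} trajectory of \eqref{APSP}. Fix then $(t_0,x_0,\lambda_0)\in E$; by \eqref{Epigraph} we have $(t_0,x_0)\in\mathrm{graph}(C)$ and $T(t_0,x_0)\le\lambda_0<+\infty$, so there is nothing to prove over those $(t_0,x_0)$ for which $T(t_0,x_0)=+\infty$.

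The first genuine step is to show that the infimum in \eqref{MT} is attained whenever it is finite. I would take solutions $(\tau_k,x_k)$ of \eqref{APSP} from $(t_0,x_0)$ with $x_k(\alpha_k)\in S$ and $\alpha_k\downarrow T(t_0,x_0)$. By Theorem \ref{Thibault} each $x_k$ solves the \emph{bounded} inclusion \eqref{SP'}, so the derivatives $\dot x_k$ are bounded by a constant depending only on $L_C$ and $M$; moreover $\dot\tau_k\equiv 1$ and, by $(H_C)$, $x_k(t)\in C(t_0+t)$ ranges in a fixed bounded set on each compact time interval. By the Arzel\`a--Ascoli theorem a subsequence converges uniformly on compact intervals to some $(\tau^{*},x^{*})$, which is again a solution of \eqref{APSP} thanks to the closedness of the solution set asserted in Theorem \ref{Thibault}; since $\alpha_k\to T(t_0,x_0)$ and $S$ is closed, $x^{*}(T(t_0,x_0))\in S$, so $(\tau^{*},x^{*})$ is time optimal.

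Next comes the principle of optimality, of which only one inequality is needed here: for $t\in[0,T(t_0,x_0)]$ the tail $s\mapsto(\tau^{*}(t+s),x^{*}(t+s))$ is a solution of \eqref{APSP} issued from $(\tau^{*}(t),x^{*}(t))$ which reaches $S$ at time $T(t_0,x_0)-t\ge 0$, hence $T(\tau^{*}(t),x^{*}(t))\le T(t_0,x_0)-t$. I then set $z(t):=(\tau^{*}(t),x^{*}(t),\lambda_0-t)$ for $t\in[0,T(t_0,x_0)]$. Since $\dot\tau^{*}\equiv 1$, $\dot x^{*}(t)\in-N_{C(\tau^{*}(t))}(x^{*}(t))+G(\tau^{*}(t),x^{*}(t))$ a.e.\ and $\frac{d}{dt}(\lambda_0-t)=-1$, the curve $z$ is an absolutely continuous solution of $\dot z\in\Gamma(z)$ with $z(0)=(t_0,x_0,\lambda_0)$; and $(\tau^{*}(t),x^{*}(t))\in\mathrm{graph}(C)$ together with $\lambda_0-t\ge T(t_0,x_0)-t\ge T(\tau^{*}(t),x^{*}(t))$ gives $z(t)\in E$ for all such $t$. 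When $T(t_0,x_0)>0$ this is precisely the trajectory required by the definition of weak invariance, with horizon $T:=T(t_0,x_0)$; the degenerate case $T(t_0,x_0)=0$, i.e.\ $x_0\in S$, has to be treated separately and more directly.

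I expect the crux of the argument to be the combination of attainment of the minimum time and the gluing of sweeping--process trajectories used above: since the right--hand side of \eqref{APSP} is neither Lipschitz nor even bounded before passing to \eqref{SP'}, neither the compactness of the minimizing sequence nor the concatenation of two solutions can be borrowed from the classical theory of differential inclusions, and both must be extracted from the special structure of the perturbed sweeping process, i.e.\ from Theorem \ref{Thibault} (equivalence with the bounded inclusion \eqref{SP'} and closedness of the solution set under uniform convergence). A secondary point requiring care is the locus $\{T=0\}\cap\mathrm{graph}(C)$ singled out at the end of the preceding paragraph.
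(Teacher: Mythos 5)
Your proposal is correct and follows essentially the same route as the paper: existence of a time-optimal trajectory (which the paper merely asserts via Theorem \ref{Thibault} and you justify by a minimizing sequence, the bounded reformulation \eqref{SP'}, Arzel\`a--Ascoli and closedness of the solution set), the one-sided dynamic programming inequality $T(\tau^{*}(t),x^{*}(t))\le T(t_0,x_0)-t$, and the countdown $\lambda_0-t$ attached to the optimal trajectory to produce a $\Gamma$-trajectory staying in $E$. The degenerate locus $T=0$ that you flag is in fact glossed over by the paper as well (and is harmless, since weak invariance of the epigraph is only exploited at points with $x\notin S$), so there is no gap relative to the paper's own argument.
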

\begin{proof}
Let $(t_{0},x_{0},\lambda_{0})\in E$. In particular, $T(t_{0}, x_{0})\leq \lambda_{0}<\infty$. 
By invoking Theorem \ref{Thibault}, it is not difficult to see that an optimal trajectory 
$(\bar{\tau},\bar{x})$ of \eqref{APSP} exists. 
Thus the optimality principle yields, for all $s\in[0,T(t_{0},x_{0})]$, 
$$T(t_{0}+s, \bar{x}(s))= T(t_{0}, x_{0}) - s \leq \lambda_{0}-s.$$
In other words the solution
$$\big(\tau(s),x(s), \lambda(s)\big)=\big(t_{0}+s, \bar{x}(s), \lambda_{0}-s\big)$$
of the Cauchy problem  $\big(\dot\tau(s), \dot{x}(s), \dot{\lambda}(s)\big)\in \Gamma\big(\tau(s),x(s),\lambda(s)\big)$, 
$\big(\tau(0),x(0), \lambda(0)\big)=\big(t_{0},x_{0}, \lambda_{0}\big)$ belongs to $E$ for all $s\in [0,T(t_{0},x_{0})]$. The proof is complete.
\end{proof}

\begin{prop}\label{SI}
Let the set-valued maps $C$ and $G$ satisfy assumptions $(H_{C})$ and $(H_{G})$, respectively, 
and let $S\subseteq \R^n$ be closed.
Consider the minimum time function $T$ to reach $S$ subject to \eqref{APSP} and assume that 
$T:\mathrm{graph}(C)\rightarrow [0,\infty]$ is upper semicontinuous and not identically equal to $+ \infty$. 
Then $H$, the hypograph of $T$, is strongly invariant with respect to the  dynamics generated by the 
velocity set \eqref{Gamma_def}.
\end{prop}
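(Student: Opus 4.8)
The plan is to obtain the strong invariance of $H$ from the sub-optimality half of the dynamic programming principle for $T$, in the same spirit as the proof of Proposition \ref{WI} but now using that \emph{every} admissible trajectory, not just an optimal one, satisfies the relevant inequality.

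First I would unwind the statement. Because the first and third components of the velocity set $\Gamma$ in \eqref{Gamma_def} are the constants $1$ and $-1$, any solution $(\tau,x,\lambda):[0,T']\to\R^{2+n}$ of $(\dot\tau,\dot x,\dot\lambda)\in\Gamma(\tau,x,\lambda)$ with $(\tau(0),x(0),\lambda(0))=(t_0,x_0,\lambda_0)$ automatically has $\tau(s)=t_0+s$ and $\lambda(s)=\lambda_0-s$, while $x(\cdot)$ solves $\dot x\in -N_{C(t_0+s)}(x)+G(t_0+s,x)$ a.e.; since $N_{C(\tau)}(\cdot)$ is empty off $C(\tau)$ and $\mathrm{graph}(C)$ is closed, $x(s)\in C(t_0+s)$ for every $s$, so $x$ is genuinely a solution of the constrained problem \eqref{APSP} issued from $(t_0,x_0)$, restricted to $[0,T']$. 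In particular $(t_0+s,x(s))\in\mathrm{graph}(C)$, so $T(t_0+s,x(s))$ is meaningful, and $(\tau(s),x(s),\lambda(s))\in H$ is \emph{equivalent} to $\lambda_0-s\le T(t_0+s,x(s))$. Since $(t_0,x_0,\lambda_0)\in H$ means precisely $\lambda_0\le T(t_0,x_0)$, it therefore suffices to prove
\[
T(t_0,x_0)\ \le\ s+T(t_0+s,x(s))\qquad\text{for all }s\in[0,T'].
\]

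To prove this I would argue by concatenation. If $T(t_0+s,x(s))=+\infty$ there is nothing to show; otherwise, given $\varepsilon>0$, Definition \ref{Minimum_Time} furnishes a solution $y$ of \eqref{APSP} issued from $x(s)$ at time $t_0+s$ that reaches $S$ within time at most $T(t_0+s,x(s))+\varepsilon$. Splice the restriction of $x$ to the time window $[t_0,t_0+s]$ with $y$: the matching condition is satisfied because $x(s)\in C(t_0+s)$, and the spliced curve is again a solution of \eqref{APSP} from $(t_0,x_0)$, since the differential inclusion and the state constraint $x(\cdot)\in C(\cdot)$ are local in time and so pass to the union of the two sub-intervals. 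This curve reaches $S$ within time $s+T(t_0+s,x(s))+\varepsilon$, whence $T(t_0,x_0)\le s+T(t_0+s,x(s))+\varepsilon$; letting $\varepsilon\downarrow 0$ gives the displayed inequality. Combining, $\lambda_0-s\le T(t_0,x_0)-s\le T(t_0+s,x(s))$, so $(\tau(s),x(s),\lambda(s))\in H$ for every $s\in[0,T']$; as the point of $H$ and the solution were arbitrary, $H$ is strongly invariant for \eqref{Gamma_def}. The upper semicontinuity of $T$ is what guarantees that $H$ is closed, so that the notion of strong invariance applies; it plays no further role.

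I do not anticipate a serious obstacle. The single point that deserves care is the well-posedness of the concatenation, namely that gluing two solutions of the perturbed sweeping process at a common value produces a solution and not merely an absolutely continuous curve satisfying the inclusion almost everywhere away from the junction — this is exactly where the closedness of $\mathrm{graph}(C)$ and the membership $x(s)\in C(t_0+s)$ are used. Everything else (the affine form of $\tau$ and $\lambda$, the inheritance of the state constraint by $x$, and the existence of near-optimal trajectories for the infimum defining $T$, via Theorem \ref{Thibault}) is immediate.
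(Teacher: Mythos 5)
Your argument is correct and is essentially the paper's proof: the paper simply invokes the sub-optimality half of the dynamic programming principle, $T(t_0,x_0)\le s+T(t_0+s,x(t_0+s))$, for an arbitrary trajectory of \eqref{APSP} and reads off $\lambda_0-s\le T(t_0+s,x(t_0+s))$, exactly as you do. The only difference is that you spell out the concatenation argument behind that inequality and the reduction of a $\Gamma$-trajectory to a trajectory of \eqref{APSP}, details the paper leaves implicit.
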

\begin{proof}
Let $\big(t_{0},x_{0},\lambda_{0}\big)\in H$ and let $(\tau,x):[t_{0},T]\rightarrow \R^{n}$ be a solution of \eqref{APSP}. 
Then the optimality principle yields, for all $s\in [0, T-t_{0}]$, 
$$ \big(\lambda_{0}\leq)\; T(t_{0},x_{0})\leq s+ T\big(t_{0}+s,x(t_{0}+s)\big),$$
which implies that $\big(\tau(s),x(s),\lambda(s)\big)=\big(t_{0}+s, x(t_{0}+s),\lambda_{0}-s\big)\in H$ for all $s\in [T-t_{0}]$. The proof is concluded.
\end{proof}

The following converse properties are also valid.

\begin{prop} \label{CSI}
Let the set-valued maps $C$ and $G$ satisfy assumptions $(H_{C})$ and $(H_{G})$, respectively, 
and let $S\subseteq \R^n$ be closed and such that \eqref{non-empty} is valid. Take a function $\theta:\mathrm{graph}(C)\rightarrow \R \cup \{\infty\}$ 
such that $\theta(t,x)>0$ for all $(t,x)\in\mathrm{graph}(C)$ for which $x\notin S$ and $\theta(t,x)=0$ for all 
$(t,x)\in\mathrm{graph}(C)$ for which $x\in S$. Then
\begin{itemize}
\item[(i): \;] if $\theta$ is lower semicontinuous and $\mathrm{epi}(\theta)$ is weakly invariant with 
respect to $\Gamma$, defined in \eqref{Gamma_def}, then 
$$\theta(t,x)\geq T(t,x)$$
for all $(t,x)\in \mathrm{graph}(C)$;
\item[(ii): \;] if $\theta$ is upper semicontinuous and $\mathrm{hypo}(\theta)$ is strongly invariant with 
respect to $\Gamma$, then 
\[\theta(t,x)\leq T(t,x)\]
for all $(t,x)\in \mathrm{graph}(C)$.
\end{itemize}
\end{prop}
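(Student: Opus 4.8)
The plan is to deduce each implication from the invariance properties of $T$ established in Propositions \ref{WI} and \ref{SI}, combined with a comparison of trajectories. For part (i), take $(t_0,x_0)\in\mathrm{graph}(C)$; I may assume $T(t_0,x_0)<\infty$, otherwise there is nothing to prove. First I would run the weak invariance of $\mathrm{epi}(\theta)$ with respect to $\Gamma$ from the point $(t_0,x_0,\theta(t_0,x_0))$ (if $\theta(t_0,x_0)=\infty$ there is again nothing to prove, so assume it is finite). This produces a $\Gamma$-trajectory $(\tau,x,\lambda)$ with $\lambda(s)=\theta(t_0,x_0)-s$ and $(\tau(s),x(s),\lambda(s))\in\mathrm{epi}(\theta)$ for $s$ in some maximal interval. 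Since $\Gamma$ forces $\dot\tau=1$ and $\dot x\in -N_{C(\tau)}(x)+G(\tau,x)$, the $(\tau,x)$-component is precisely a solution of \eqref{APSP} issuing from $(t_0,x_0)$. The key point is that $\lambda(s)\geq\theta(\tau(s),x(s))>0$ as long as $x(s)\notin S$, so $\lambda$ cannot reach $0$ before $x$ hits $S$; hence $x$ must reach $S$ at some time $s^*\leq\theta(t_0,x_0)$, which by definition \eqref{MT} of the minimum time gives $T(t_0,x_0)\leq s^*\leq\theta(t_0,x_0)$.

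For part (ii), fix $(t_0,x_0)\in\mathrm{graph}(C)$ and suppose first $T(t_0,x_0)<\infty$. By Definition \ref{Minimum_Time} there is, for every $\varepsilon>0$, a solution $(\tau,x)$ of \eqref{APSP} with $x(\alpha)\in S$ for some $\alpha\leq T(t_0,x_0)+\varepsilon$ (and using Theorem \ref{Thibault} one may in fact take an optimal trajectory with $\alpha=T(t_0,x_0)$). Lift it to a $\Gamma$-trajectory by setting $\lambda(s)=\lambda_0-s$ where $\lambda_0:=\theta(t_0,x_0)$ — again assuming $\lambda_0$ finite, the infinite case being handled separately or trivially. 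Starting from $(t_0,x_0,\lambda_0)\in\mathrm{hypo}(\theta)$, strong invariance forces $(\tau(s),x(s),\lambda(s))\in\mathrm{hypo}(\theta)$ for all $s$, i.e. $\lambda_0-s\leq\theta(\tau(s),x(s))$. Evaluating at $s=\alpha$, where $x(\alpha)\in S$ and hence $\theta(\tau(\alpha),x(\alpha))=0$, yields $\lambda_0\leq\alpha\leq T(t_0,x_0)+\varepsilon$, and letting $\varepsilon\to0$ gives $\theta(t_0,x_0)\leq T(t_0,x_0)$. If $T(t_0,x_0)=\infty$ the inequality is automatic.

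The minor technical points to be careful with are: the maximal interval of definition in the weak invariance argument (one must argue that the trajectory can be continued until $x$ meets $S$, which follows because $\lambda$ stays bounded below by $\theta\circ(\tau,x)>0$ off $S$, while $\lambda(s)=\theta(t_0,x_0)-s$ decreases at unit rate, forcing $S$ to be reached in finite time no larger than $\theta(t_0,x_0)$); and, in part (ii), the existence of an \emph{optimal} trajectory realizing $T(t_0,x_0)$, which one gets from the closedness of the solution set in Theorem \ref{Thibault} together with an Ascoli-type compactness argument on trajectories of the bounded-right-hand-side problem \eqref{SP'}, exactly as invoked in the proof of Proposition \ref{WI}. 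I expect the main (though still modest) obstacle to be the continuation argument in part (i): one has to rule out that the weakly invariant trajectory ceases to exist, or that $\lambda$ hits $0$, strictly before $x$ reaches $S$, and this is where the sign hypotheses on $\theta$ ($\theta>0$ off $S$, $\theta=0$ on $S$) are used decisively.
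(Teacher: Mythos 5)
Your proposal is correct and follows essentially the same route as the paper: for (i) you run the weak invariance of $\mathrm{epi}(\theta)$ from $(t_0,x_0,\theta(t_0,x_0))$ on a maximal interval and use the sign hypotheses on $\theta$ (together with $\lambda(s)=\theta(t_0,x_0)-s\ge\theta(\tau(s),x(s))$) to force arrival at $S$ by time $\theta(t_0,x_0)$, and for (ii) you lift a near-optimal trajectory to a $\Gamma$-trajectory and evaluate the strong-invariance inequality $\theta(t_0,x_0)-s\le\theta(\tau(s),x(s))$ at the arrival time, exactly as the paper does. The only cosmetic point is that the trivial case in (i) is $\theta(t_0,x_0)=\infty$ rather than $T(t_0,x_0)=\infty$, but since your argument never uses the latter reduction this does not affect the proof.
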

\begin{proof}
(i): The statement needs a verification just at points $(t,x)\in \mathrm{dom}(\theta)$ with $x\notin S$. 
Let $(\tau, y, \lambda): [0,T)\rightarrow \R^{1+n+1}$ be a solution, defined on a maximal interval of existence, of 
$$ (\dot{\tau}(t), \dot{y}(t), \dot{\lambda}(t))\in \Gamma(\tau(t), x(t), \lambda(t)), \qquad (\tau(0), y(0), \lambda(0))=(t,x,\theta(t,x))$$
such that
\begin{equation}\label{inequality}
\theta(t,x)-s \geq \theta(t+s, y(s))  >0 \qquad \forall \; s\in[0,T).
\end{equation}
By the above property, the end time $T$ must be finite, otherwise we contradict the hypothesis $\theta(t,x)\geq 0$ for all 
$(t,x)\in \mathrm{graph}(C)$. Observe also that, by the boundedness of $\Gamma$, $y$ can be extended up 
to $s=T$. We claim that $\bar{\theta}:= \liminf_{s\uparrow T} \theta(t+s, y(s))= \theta(t+T, y(T))=0 $. 
Indeed, if not we can prolong $(\tau, y, \lambda)$ beyond $T$ still remaining in $E$ and with $y(T+s)\notin S$ for all 
$s >0$ small enough, hence violating the maximality of $T$.
Thus, from our assumptions on $\theta$, it follows that
$y(T)\in S$ and that $T\geq T(t,x)$. Recalling \eqref{inequality}, the verification of the claim (i) is concluded.

(ii): The statement requires verification only at points $(t_{0},x_{0})\in \mathrm{dom}(T)$ with 
$x_{0}\notin S$. Fix $\eta >0$ and let $y_{\eta}$ be a solution of \eqref{PSP} defined on $[t_{0},T]$ with 
$T-t_{0}< T(t_{0},x_{0})+\eta $ and $y_{\eta}(T)\in S$. Define $(\tau(s), y(s), \lambda(s)):= (t_{0}+s, y(t_{0}+s), 
\theta(t_{0}, x_{0})-s),$ $s\in[0, T-t_{0}]$. Observe that $(\tau, y, \lambda)$ is a solution of the Cauchy problem
$$ (\dot{\tau}(t), \dot{y}(t), \dot{\lambda}(t))\in \Gamma(\tau(t), x(t), \lambda(t)), \qquad (\tau(0), y(0), 
\lambda(0))=(t_{0},x_{0},\theta(t_{0},x_{0})).$$
By strong invariance assumption, $\theta(t_{0}+s, y(s))\geq \theta(t_{0},x_{0})-s$ for all $s\in [0, T-t_{0}]$. 
By taking $s=T-t_{0}$, using the relation $y(T-t_{0})=y_{\eta}(T)\in S$, we obtain that 
\[
\theta (t_{0},x_{0}) -(T-t_{0}) \leq 0 
\]
hence
\[
\theta (t_{0},x_{0}) \leq T-t_{0}  < T(t_{0},x_{0})+ \eta.
\]
Since $\eta$ was arbitrarily chosen, the proof is complete.
\end{proof}
In the case in which $T$ is continuous, from the above results one immediately obtains a verification theorem.
\begin{cor}\label{verif}
Let the set-valued maps $C$ and $G$ satisfy assumptions $(H_{C})$ and $(H_{G})$, respectively, and let $S\subseteq \R^n$ be closed
and such that \eqref{non-empty} is valid. Take a continuous function $\theta:\mathrm{graph}(C)\rightarrow \R $ 
such that $$\theta(t,x)>0\qquad \forall\;(t,x)\in\mathrm{graph}(C)\qquad \mathrm{for}\; \mathrm{which}\quad x\notin S,$$
$$\theta(t,x)=0\qquad \forall \; (t,x)\in\mathrm{graph}(C)\qquad \mathrm{for}\; \mathrm{which} \quad x\in S.$$
Assume moreover that $\mathrm{epi}(\theta)$ is weakly invariant and $\mathrm{hypo}(\theta)$ is strongly 
invariant with respect to the dynamics $\Gamma$ defined in \eqref{Gamma_def}.
Then $\theta(t,x)=T(t,x)$ for every $(t,x)\in \mathrm{dom}(\theta)$.
\end{cor}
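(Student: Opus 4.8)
The plan is to obtain the corollary as an immediate consequence of Proposition \ref{CSI}, whose two items together trap $T$ from above and from below. First I would record that, since $\theta:\mathrm{graph}(C)\to\R$ is continuous and real-valued, it is simultaneously lower and upper semicontinuous and $\mathrm{dom}(\theta)=\mathrm{graph}(C)$; moreover the sign conditions imposed on $\theta$ (positive off $S$, zero on $S$) are precisely the hypotheses appearing in Proposition \ref{CSI}, and the compatibility condition \eqref{non-empty} is assumed. Thus both items of that proposition are applicable, with the same auxiliary velocity set $\Gamma$ of \eqref{Gamma_def}.

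Next I would apply Proposition \ref{CSI}(i): the lower semicontinuity of $\theta$ together with the weak invariance of $\mathrm{epi}(\theta)$ yields $\theta(t,x)\ge T(t,x)$ for all $(t,x)\in\mathrm{graph}(C)$. Then I would apply Proposition \ref{CSI}(ii): the upper semicontinuity of $\theta$ together with the strong invariance of $\mathrm{hypo}(\theta)$ yields $\theta(t,x)\le T(t,x)$ for all $(t,x)\in\mathrm{graph}(C)$. Combining the two inequalities gives $\theta(t,x)=T(t,x)$ for every $(t,x)\in\mathrm{graph}(C)=\mathrm{dom}(\theta)$, which in particular shows that $T$ is finite and continuous on $\mathrm{graph}(C)$.

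There is no genuine obstacle here, since all the substantive work is already done inside Proposition \ref{CSI}; the only points worth a remark are that Proposition \ref{CSI}(i) is stated for functions with values in $\R\cup\{\infty\}$, so the \emph{finiteness} of $T$ comes out as a conclusion rather than being assumed, and that the weak- and strong-invariance hypotheses on $\mathrm{epi}(\theta)$ and $\mathrm{hypo}(\theta)$ refer to the \textbf{same} dynamics $\Gamma$, so no compatibility issue arises between the two applications.
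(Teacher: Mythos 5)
Your proposal is correct and is exactly the argument the paper intends: the corollary is stated as an immediate consequence of Proposition \ref{CSI}, obtained by noting that a continuous real-valued $\theta$ is both lower and upper semicontinuous, so items (i) and (ii) give $\theta\ge T$ and $\theta\le T$ respectively. Your added remarks on finiteness of $T$ and on the two invariance hypotheses referring to the same dynamics $\Gamma$ are accurate and harmless.
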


\section{Hamilton-Jacobi inequalities}\label{H-J_ineq_sec}
The main result of this part, which follows from combining Sections \ref{INV} and \ref{DP}, is the culmination of the whole paper.
Before stating the theorem, we set the Hamiltonian notation. Let $C$ and $G$ be the set-valued maps which have 
come throughout the paper and let $\theta: \mathrm{graph}(C) \rightarrow \R \cup \{\infty\}$ be a function. 
For $(\tau,x,\lambda)\in \mathrm{epi}(\theta)$ and $p\in N^{P}_{\mathrm{epi}(\theta)}(\tau,x,\lambda)$, define
\begin{equation}\label{H-_Gamma}
H_{-}(\tau,x,\lambda,p):= \min_{v\in\{0\}\times \{ -N_{C(\tau)}(x)\cap(L+M)\B\}\times\{0\}} v\boldsymbol{\cdot} p + \min_{v\in \{1\} 
\times G(\tau,x)\times \{ -1\}} v\boldsymbol{\cdot} p,
\end{equation}
while for $(\tau,x,\lambda)\in \mathrm{hypo}(\theta)$ and $p\in N^{P}_{\mathrm{hypo}(\theta)}(\tau,x,\lambda)$,
\begin{equation} \label{H+_Gamma}
H_{+}(\tau,x,\lambda,p):= \min_{v\in\{0\}\times \{ -N_{C(\tau)}(x)\cap(L+M)\B\}\times\{0\}} v\boldsymbol{\cdot} p + \max_{v\in \{1\} 
\times G(\tau,x)\times \{ -1\}} v\boldsymbol{\cdot} p.
\end{equation}

\begin{thm}\label{main}
Let the set-valued maps $C$ and $G$ satisfy assumptions $(H_{C})$ and $(H_{G})$, respectively. 
Let $S\subset \R ^{n}$ be closed and consider the minimum time $T(t_{0},x_{0})$ to reach $S$ subject to \eqref{PSP}. 
Assume that $T$ is continuous on $\mathrm{graph}(C)$. Then $T$ is the unique continuous function satisfying the following properties:
$$T(t,x)>0\qquad \forall\;(t,x)\in\mathrm{graph}(C)\qquad \mathrm{for}\; \mathrm{which}\quad x\notin S,$$
$$T(t,x)=0\qquad \forall \;(t,x)\in\mathrm{graph}(C)\qquad \mathrm{for}\; \mathrm{which} \quad x\in S,$$
\begin{itemize}
\item[$(H_{-})$] $\qquad H_{-}(t,x,T(t,x), p) \leq 0 \qquad \forall\; (t,x)\in\mathrm{graph}(C), 
\quad \forall \; p \in N^{P}_{\mathrm{epi}\,(T)}(t,x,T(t,x)),$
\item[$(H_{+})$] $\qquad H_{+}(t,x,T(t,x), p) \leq 0 \qquad \forall\; (t,x)\in\mathrm{graph}(C), 
\quad \forall \; p \in N^{P}_{\mathrm{hypo}\, (T)}(t,x,T(t,x))$.
\end{itemize}
\end{thm}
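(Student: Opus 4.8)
The strategy is to assemble the theorem from the invariance machinery built in Sections \ref{INV} and \ref{DP}. The proof splits into two directions. First I would show that $T$ itself satisfies the two Hamilton-Jacobi inequalities $(H_-)$ and $(H_+)$; second I would show uniqueness, i.e.\ that any continuous $\theta$ with the stated boundary behaviour and satisfying $(H_-)$, $(H_+)$ must coincide with $T$.

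For the first direction, I would use Proposition \ref{WI} and Proposition \ref{SI}: since $T$ is continuous (hence both l.s.c.\ and u.s.c.), the epigraph $E=\mathrm{epi}(T)$ is weakly invariant for the augmented dynamics $\Gamma$ of \eqref{Gamma_def}, and the hypograph $H=\mathrm{hypo}(T)$ is strongly invariant for the same $\Gamma$. Now I would invoke the Hamiltonian characterisations of Section \ref{INV}. The subtlety is that Theorems \ref{wih} and \ref{sih} are stated for an augmented system with an extra $\dot\tau=1$ coordinate but \emph{not} the extra $\dot\lambda=-1$ coordinate; however, adding the trivial decoupled equation $\dot\lambda=-1$ changes nothing in the proofs, since it contributes a fixed component $\{-1\}$ to the velocity set and the set $K$ in those theorems is simply replaced by a set in $\R^{n+1}$ whose last slice is all of $\R$ (here $E$, resp.\ $H$, play the role of $\mathcal K\cap\mathrm{graph}(C)$, after identifying the "$C$" of those theorems with $\mathrm{graph}(C)\times\R$). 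Applying the weak-invariance characterisation \eqref{Hmeno} to $E$ yields exactly $(H_-)$ with $H_-$ as defined in \eqref{H-_Gamma}; applying the strong-invariance characterisation \eqref{Hpiu} to $H$ yields $(H_+)$ with $H_+$ as in \eqref{H+_Gamma}. One must check that the one-sided-Lipschitz-type estimates in the proof of Theorem \ref{sih} are unaffected by the inert $\lambda$-coordinate, which is immediate since those estimates only involve $\|x(t)-x\|$ and $|\tau(t)-\tau|$.

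For uniqueness, let $\theta:\mathrm{graph}(C)\to\R$ be continuous, strictly positive off $S$, zero on $S$, and satisfying $(H_-)$ and $(H_+)$. Running the same Hamiltonian characterisations in reverse: inequality $(H_-)$ for $\theta$ together with Theorem \ref{wih} (augmented by $\dot\lambda=-1$) gives that $\mathrm{epi}(\theta)$ is weakly invariant for $\Gamma$, while $(H_+)$ together with Theorem \ref{sih} gives that $\mathrm{hypo}(\theta)$ is strongly invariant for $\Gamma$. Then Corollary \ref{verif} applies verbatim and concludes $\theta(t,x)=T(t,x)$ on $\mathrm{dom}(\theta)=\mathrm{graph}(C)$. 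Note that the necessary condition \eqref{non-empty} needed by Corollary \ref{verif} is automatic here: since $T$ is assumed finite (continuous, real-valued) there is at least one admissible trajectory reaching $S$, so $S\cap C(t)\ne\emptyset$ for some $t\ge t_0$.

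The main obstacle is not any single hard estimate but rather the bookkeeping of the augmented coordinates: one must carefully match the $(n+1)$-dimensional state of \eqref{APSP} (the $(\tau,x)$ block governed by the sweeping dynamics, with $\mathrm{graph}(C)$ as the implicit constraint) against the $(n+2)$-dimensional state $(\tau,x,\lambda)$ of $\Gamma$, verifying that the extra $\lambda$-equation is genuinely decoupled so that Theorems \ref{wih} and \ref{sih} transfer without loss, and that the proximal normal cones $N^P_{\mathrm{epi}(T)}$ and $N^P_{\mathrm{hypo}(T)}$ are exactly the objects produced by those theorems when the constraint set is taken to be $E$, resp.\ $H$, inside $\mathrm{graph}(C)\times\R$. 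Once this identification is in place, everything else is a direct citation of the results already established.
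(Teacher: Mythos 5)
Your proposal is correct and follows essentially the same route as the paper: the authors likewise obtain $(H_-)$ and $(H_+)$ by combining Propositions \ref{WI} and \ref{SI} with Theorems \ref{wih} and \ref{sih}, and deduce uniqueness from Corollary \ref{verif}, the only observation needed being that $\Gamma(t,x,\lambda)=\{1\}\times\big(-N_{C(\tau)\times\R}(x,\lambda)+G(\tau,x)\times\{-1\}\big)$ fits the framework of Section \ref{INV}. Your bookkeeping of the decoupled $\lambda$-coordinate is exactly the paper's rewriting of the moving set as $C(\tau)\times\R$ and of the perturbation as $G\times\{-1\}$.
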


\begin{rem}
The inequalities $(H_{-})$ and $(H_{+})$ contain both a boundary condition (at the boundary of $\mathrm{graph}(C)$) 
and proximal inequalities in the interior of $\mathrm{graph}(C)$, if any. More precisely assume, in addition to the 
hypotheses of Theorem \ref{main}, that $\mathrm{int}\,C(t) \neq \emptyset$ for all $t\geq0$. 
Then $(H_{-})$ yields, for all $t>0$, $x\in \mathrm{int}\,C(t)$  and $p\in \partial_{P} T(t,x)$, if any, 
$$\min_{w\in \{1\}\times G(t,x)} w\boldsymbol{\cdot} p \leq 0$$
and $p\in \partial^{P} T(t,x)$, if any, 
$$ - \min_{w\in \{ 1\}\times G(t,x)} w\boldsymbol{\cdot} p = \max_{w\in \{ 1\}\times G(t,x)} w\boldsymbol{\cdot} (-p)\leq 0$$
In view of Rockafellar's horizontality theorem \cite{Ro}, it is well known that is not necessary to test 
the Hamiltonian inequalities at horizontal normal vectors (see \cite{WoZh}, pg. 1059). 
In particular, if both $\partial_{P}T(t,x)$ and $\partial^{P}T(t,x)$ are non-empty, then ($\nabla T(t,x)$ exists and) 
$$ \frac{\partial T}{\partial t}(t,x) + \min_{w\in G(t,x)} w\boldsymbol{\cdot} \frac{\partial T}{\partial x}(t,x)=0.$$
\end{rem}
\begin{proof}
The result follows from combining Proposition \ref{WI} with Theorem \ref{wih} and Proposition \ref{SI} with Theorem 
\ref{sih} and from Corollary \ref{verif}. We just observe that results of Section \ref{INV} can be applied 
to the augmented dynamic $\Gamma$ defined in \eqref{Gamma_def} because we can rewrite
$$ \Gamma(t,x,\lambda)= \{1\} \times \big(-N_{C(\tau)\times \R} (x,\lambda) + G(\tau,x)\times \{ -1\}\big).$$
The proof is concluded.
\end{proof}

We devote the last part of this section to a statement of the problem and of the main result in the autonomous case. 
This is sometimes called a state constrained problem with reflecting boundary (see \cite{Se05}).
The assumptions now are:
\begin{equation}\label{ass_aut}
\begin{split}
&\text{$C\subset \R^{n}$ is closed and $r$-prox-regular,}\\
&\text{$G : \R ^{n} \leadsto \R^{n}$   
is a $L_{C}$-Lipschitz continuous set-valued map with compact and convex values,}\\
&\text{the closed target $S\subset \R^{n}$ is such that $S\cap C \neq \emptyset$.
}
\end{split}
\end{equation}
Consider the Cauchy problem
\begin{equation}\label{PSPa}
\left\{ \begin{array}{l}
\dot{x}(t)\,\in \,  -N_{C}(x(t)) + G(x(t))\quad \mathrm{a.e.}\; t > 0,
\\
x(0)= x_{0}\in C\;
\end{array}\right.
\end{equation}
and define the minimum time to reach $S$ subject to \eqref{PSPa} by setting
\[
T(x_{0})= \inf \{ \alpha \geq 0 : \; x(\alpha)\in S, \quad x\quad \mathrm{is}\quad\mathrm{a}\quad\mathrm{solution}\quad\mathrm{of}\quad\eqref{PSPa} \}
\]
We rephrase our main result for this case:

\begin{thm} \label{main_a} Let $C$, $S$, $G$ satisfy the assumptions \eqref{ass_aut}. 
Suppose that the minimum time function is $T$ to reach $S$ subject to \eqref{PSPa} is continuous. 
Then $T$ is the unique continuous function satisfying the following properties
\begin{itemize}
\item[ ]$$ T(x)=0 \qquad \forall \; x\in S\cap C,$$
\item[ ]$$T(x)>0 \qquad \forall \; x\in S \setminus C,$$
\end{itemize}
$$(Ha_{-})\qquad \min_{v\in (-N_{C}(x)\cap L_{C}B) \times \{ 0\}}+\min_{v\in G(x) \times \{-1\}}v\boldsymbol{\cdot} p 
\leq 0 \qquad \forall\; x\in C, \quad \forall \; p \in N^{P}_{\mathrm{epi}(T)}(x,T(x)),$$
$$(Ha_{+})\qquad \min_{v\in (-N_{C}(x)\cap L_{C}B) \times \{ 0\}}+\max_{v\in G(x) \times \{-1\}}v\boldsymbol{\cdot} p 
\leq 0 \qquad \forall\; x\in C \quad \forall \; p \in N^{P}_{\mathrm{hypo}(T)}(x,T(x)).$$
In particular, at any interior point of $C$ (if any), the above Hamiltonian inequalities become the classical proximal solution conditions
$$\min_{v\in G(x) \times \{-1\}}v\boldsymbol{\cdot} p \leq 0 \qquad \forall\; x\in C, \quad \forall \; p \in \partial_{P}T(x),$$
$$\min_{v\in G(x) \times \{-1\}}v\boldsymbol{\cdot} p \geq 0 \qquad \forall\; x\in C, \quad \forall \; p \in \partial^{P}T(x).$$
\end{thm}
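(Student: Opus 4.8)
The plan is to obtain Theorem~\ref{main_a} as the specialization of Theorem~\ref{main} to a constant $C(t)\equiv C$ and a time-independent $G(t,x)=G(x)$, together with a translation of the Hamiltonian inequalities into the smaller state space. The starting observation is that, for such data, $(\tau,x)$ solves the augmented system \eqref{APSP} from $(t_{0},x_{0})$ if and only if $\tau(t)=t_{0}+t$ and $x$ solves \eqref{PSPa} from $x_{0}$; hence the reaching condition $x(\alpha)\in S$ in Definition~\ref{Minimum_Time} is insensitive to $t_{0}$, so the minimum time $T(t_{0},x_{0})$ is independent of $t_{0}$ and coincides with the function $T(x_{0})$ of the statement, and continuity of $T$ on $\mathrm{graph}(C)=[0,\infty)\times C$ is equivalent to continuity of $T(\cdot)$ on $C$. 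One point requires a remark: $(H_{C})$ and $(H_{G})$ ask in addition that $C$ be compact and $G$ bounded, while \eqref{ass_aut} only requires $C$ closed and $G$ Lipschitz with compact values. However, a Lipschitz compact-valued $G$ is locally bounded, and all the arguments of Sections~\ref{INV}--\ref{H-J_ineq_sec} are local in nature (they use only the uniform prox-regularity of $C$, the one-sided Lipschitz estimates of Theorem~\ref{sih}, and Gronwall's lemma); alternatively one localizes, using the continuity of $T$ to confine to a bounded region---hence to a set where $G$ is bounded---all trajectories of \eqref{PSPa} relevant for reaching $S$ from a fixed point, and replaces $G$ by a bounded global modification agreeing with it there. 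Either way, Theorem~\ref{main} applies and characterizes $T$ as the unique continuous function with the stated sign behaviour satisfying $(H_{-})$ and $(H_{+})$.

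It remains to see that $(H_{-}),(H_{+})$ become $(Ha_{-}),(Ha_{+})$ under these data. Since $T$ does not depend on the time variable, $\mathrm{epi}(T)=[0,\infty)\times\{(x,\lambda):x\in C,\ \lambda\ge T(x)\}$ and similarly $\mathrm{hypo}(T)=[0,\infty)\times\{(x,\lambda):x\in C,\ \lambda\le T(x)\}$. Consequently, at a point $(\tau,x,T(x))$ with $\tau>0$ every proximal normal has the form $p=(0,q)$ with $q$ a proximal normal to the $(x,\lambda)$-epigraph (resp. hypograph) of $T$ at $(x,T(x))$; the first summand of $H_{\pm}$ is a minimum over vectors whose only possibly nonzero entry lies in the $x$-slot, so it depends only on the $x$-component of $q$ and reproduces the first summand of $(Ha_{\pm})$, while the second summand, evaluated on $p$ whose $\tau$-entry is $0$, reproduces the $G$-term of $(Ha_{\pm})$ (the coordinate $\dot\tau=1$ simply drops out). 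Moreover, since $L_{C}=0$ for a constant $C$, the truncation radius $L+M$ of Theorem~\ref{main} reduces to a bound on $G$ (the constant written $L_{C}\B$ in the statement; by Theorem~\ref{Thibault} enlarging the admissible normal part beyond this bound does not change the solution set of \eqref{PSPa}, so the precise value is immaterial). Finally, at the boundary point $\tau=0$ the proximal normal cone to $\mathrm{epi}(T)$ (or $\mathrm{hypo}(T)$) acquires an extra summand $(-\infty,0]$ in the $\tau$-slot, but every velocity in the augmented dynamics \eqref{Gamma_def} has $\tau$-component equal to $1$, so adding a non-positive number to the relevant inner products only decreases $H_{\pm}$; hence the $\tau=0$ inequalities are equivalent to the $\tau>0$ ones, i.e. to $(Ha_{\pm})$, and the equivalence $(H_{\pm})\Longleftrightarrow(Ha_{\pm})$ is complete.

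The concluding ``in particular'' assertion is the autonomous instance of the Remark following Theorem~\ref{main}: at an interior point $x$ of $C$ one has $N_{C}(x)=\{0\}$, so the first (normal-cone) summand of $(Ha_{\pm})$ drops out, and testing against $p\in\partial_{P}T(x)$ (resp. $p\in\partial^{P}T(x)$)---the horizontal normals being irrelevant by Rockafellar's horizontality theorem \cite{Ro}, exactly as in \cite{WoZh}---turns $(Ha_{-})$ and $(Ha_{+})$ into the two displayed classical proximal Hamilton--Jacobi relations for the minimum time. The only step in the whole argument that is not pure bookkeeping on normal cones is the reconciliation of the hypotheses of Theorem~\ref{main} with the weaker assumptions \eqref{ass_aut}; everything else is a direct transcription.
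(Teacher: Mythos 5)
Your proposal is correct and follows exactly the route the paper intends: Theorem \ref{main_a} is presented there without proof, merely as a ``rephrasing'' of Theorem \ref{main} for constant $C$ and autonomous $G$, and your argument supplies the bookkeeping (time-independence of $T$, the product structure of $\mathrm{epi}(T)$ and $\mathrm{hypo}(T)$ over $[0,\infty)$, the vanishing $\tau$-component of the proximal normals for $\tau>0$, and the reduction at interior points via Rockafellar's horizontality theorem) that the authors leave implicit. Your observation that \eqref{ass_aut} is formally weaker than $(H_C)$, $(H_G)$ (no compactness of $C$, no global bound on $G$) and must be reconciled by a localization is a legitimate point the paper glosses over, and your fix is adequate.
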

Again, observe that the Hamiltonian inequalities contain some boundary conditions.

\section{On the continuity of the minimum time function}\label{sec:cont}

Throughout the whole paper we have assumed that $T$ is (finite and) continuous on the whole
 graph of the moving set $C$. In this section we give a sufficient condition for this property. 
We will state a controllability result general enough to cover the example in Section \ref{Ex}. 
The sufficient condition we propose is of Petrov type (with variable coefficient), see, e.g., \cite[Sect. 8.2]{CS}. 
Much finer results can be found in the literature (see, e.g. \cite{LeMa}, \cite{Ma}, \cite{MaRi} and references therein) 
and we believe they can be adapted to \eqref{PSP}. 

The result we are going to present follows the lines of Theorem 1 in \cite{Ma}. However, in order to 
overcome the lack of Lipschitz continuity of the normal cone $N_{C(\cdot)}(\cdot)$, we need to impose a 
Petrov-like condition which takes into account also normals at nearby points.

\begin{prop} \label{continuity}
Let the set-valued maps $C$ and $G$ satisfy $(H_{C})$ and $(H_{G})$, respectively. 
Let $S\subset \R^{n}$ be closed and satisfying \eqref{non-empty} and the internal sphere condition. 
Assume furthermore that there exist a continuous non decreasing function $\mu: 
[0,\infty)\rightarrow [0,\infty)$, with $\mu(0)=0$ and
$$ \mu(\rho)>0,\qquad\qquad \int_{0}^{\rho} \frac{dr}{\mu(r)}< \infty\qquad \forall \; \rho>0,$$
and $\delta>0$ with the following property: for all $t\geq0$ and $x\in C(t)$, there exist $\bar{v}\in G(t,x)$ and 
$\bar{\xi} \in \partial^{P}d_{S}(x)$ such that, for all $(s,y)\in \mathrm{graph(C)}$ with 
$\|(s,y)-(t,x)\|\leq \delta$ and all $p\in N_{C(s)}(y)\cap L \B$, one has
$$ (\bar{v}-p)\boldsymbol{\cdot} \bar{\xi} \leq -\mu(d_{S}(x)).$$
Then  $T:\mathrm{graph}(C)\rightarrow \R^{+}$ is (finite and) continuous.
More precisely, for all $(t_{1},x_{1}), (t_{2},x_{2}) \in \mathrm{graph}(C)$, we have
\begin{equation}\label{mod_cont}
|T(t_{2},x_{2})-T(t_{1},x_{1})| \leq \int_{0}^{e^{KT}\|x_{2}-x_{1}\|+K'\sqrt{|t_{1}-t_{2}|}}\frac{2}{\mu(r)}\, dr,
\end{equation}
for suitable constants $K$ and $K'$ depending only on $C$, $G$ and the dimension of the space, 
provided $T(t_{1},x_{1})$ and $T(t_{2},x_{2})$ are both not larger than a given constant.
\end{prop}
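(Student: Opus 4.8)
The plan is to prove controllability (finiteness of $T$) and the modulus of continuity estimate \eqref{mod_cont} together, following the strategy of Theorem 1 in \cite{Ma}, with the main extra work coming from the non-Lipschitz normal cone term. The starting point is the equivalence, from Theorem \ref{Thibault}, between solutions of \eqref{PSP} and solutions of the unconstrained inclusion with bounded right hand side $\dot x\in -L\,\partial d_{C(t)}(x)+G(t,x)$, where $L=L_C+M$ as in \eqref{defL}. I would first show that for a fixed initial datum $(t_0,x_0)\in\mathrm{graph}(C)$ one can build a solution $x(\cdot)$ of \eqref{PSP} along which the distance $d_S(x(t))$ decreases at a controlled rate. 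The key computation: take $\bar v\in G(t_0,x_0)$ and $\bar\xi\in\partial^P d_S(x_0)$ furnished by the Petrov-type hypothesis; pick the selection $g(\cdot)$ from $G$ close to $\bar v$ (Lipschitz continuity of $G$) and let $x(\cdot)$ solve \eqref{SPU} with that velocity $v(t)=g(t)$, which has a unique solution. Then, using the semiconcavity of $d_S$ up to its boundary (inequality \eqref{semiconc}, valid because $S$ satisfies the internal sphere condition) together with the hypothesis applied along the trajectory — noting that the normal vector $p=\xi(t)\in N_{C(\tau(t))}(x(t))\cap L\B$ appearing in $\dot x(t)=-\xi(t)+g(t)$ is exactly of the type quantified over in the assumption $(\bar v-p)\boldsymbol{\cdot}\bar\xi\le-\mu(d_S(x))$ — I would derive a differential inequality of the form
\[
\frac{d}{dt}\,d_S(x(t))\le -\mu\big(d_S(x(t))\big)+(\text{error terms controlled by }\|x(t)-x_0\|+|t-t_0|).
\]
An iteration/continuation argument on small time intervals (re-centering at $(t_1,x(t_1))$, as in the sufficiency proof of Theorem \ref{sih}) then upgrades this to a genuine decrease estimate on $d_S(x(\cdot))$; since $\int_0^\rho dr/\mu(r)<\infty$, the distance hits $0$ in finite time, so $x(\cdot)$ reaches $S$ and $T(t_0,x_0)<\infty$, with an explicit bound in terms of $\int_0^{d_S(x_0)}dr/\mu(r)$.

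For the continuity estimate \eqref{mod_cont}, I would take two points $(t_1,x_1),(t_2,x_2)\in\mathrm{graph}(C)$ with $T(t_i,x_i)$ bounded by a fixed constant, let $x_1(\cdot)$ be a near-optimal trajectory from $(t_1,x_1)$ reaching $S$ at time $T(t_1,x_1)+\eta$, and compare it with a trajectory $x_2(\cdot)$ from $(t_2,x_2)$ built to shadow $x_1(\cdot)$. The shadowing trajectory is obtained by solving \eqref{SPU} with the \emph{same} measurable velocity selection $v(\cdot)$ as for $x_1$ (after a time-shift to align the initial times, which produces the $\sqrt{|t_1-t_2|}$ term via the Lipschitz-in-time estimate $d_H(C(t),C(s))\le L_C|t-s|$ and a Gronwall-type argument for the $C$-dependence); uniqueness and the hypomonotonicity of the normal cone to a prox-regular set — the mechanism behind uniqueness in \eqref{SPU} — give $\|x_2(t)-x_1(t)\|\le e^{Kt}\|x_2-x_1\|+K'\sqrt{|t_1-t_2|}$ for suitable $K,K'$ depending only on $C$, $G$, $n$. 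At the terminal time $x_1$ is in $S$, so $x_2$ is within $e^{KT}\|x_2-x_1\|+K'\sqrt{|t_1-t_2|}$ of $S$; feeding this as the new "initial distance" into the controllability estimate above yields $T(t_2,x_2)\le T(t_1,x_1)+\eta+\int_0^{e^{KT}\|x_2-x_1\|+K'\sqrt{|t_1-t_2|}}dr/\mu(r)$, and letting $\eta\downarrow0$ and symmetrizing in the two points gives \eqref{mod_cont}.

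The main obstacle I expect is the bookkeeping around the non-Lipschitz term $-N_{C(t)}(x)$: unlike the classical Petrov argument, one cannot directly differentiate $d_S$ along an arbitrary selection and must carefully exploit that the normal vectors $\xi(t)$ driving the sweeping part are precisely the vectors quantified in the hypothesis (that is why the assumption is stated with "normals at nearby points" $p\in N_{C(s)}(y)\cap L\B$ for $(s,y)$ near $(t,x)$), and one must control the extra error terms produced by prox-regularity of $C$ — exactly the terms $\frac{L}{2r}\|x(t)-y\|^2$, $LL_C|\tau-\tau(t)|$ etc. appearing in \eqref{ineq_2}–\eqref{ineq_3} in the proof of Theorem \ref{sih}. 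Absorbing these into the $\mu$-decay requires the localization radius $\delta$ and a short-time/continuation scheme so that $\|x(t)-x_0\|$ and $|t-t_0|$ stay small enough that the error is dominated by $\mu(d_S(x(t)))$; closing this loop cleanly, and simultaneously getting the clean constants $K,K'$ in the comparison step, is where the real work lies, the rest being by now standard Gronwall and semiconcavity estimates.
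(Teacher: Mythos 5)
Your overall architecture coincides with the paper's: a Petrov-type decay estimate for $d_S$ along a trajectory driven by a Lipschitz selection through $\bar v$, obtained from the semiconcavity inequality \eqref{semiconc} together with the hypothesis applied to the actual normal forcing $\xi(t)\in N_{C(\tau(t))}(x(t))\cap L\B$, localized by $\delta$ and iterated in small steps to give $T(t_0,x_0)\le 2\int_0^{d_S(x_0)}dr/\mu(r)$; then a comparison of two trajectories via the hypomonotonicity inequality \eqref{ineqphi} and Gronwall, producing the $e^{Kt}\|x_2-x_1\|+K'\sqrt{|t_1-t_2|}$ bound; then concatenation, $\eta\downarrow 0$, and symmetrization. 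All of that matches the paper step for step, including the role of the ``normals at nearby points'' in the hypothesis and the absorption of the prox-regularity error terms.

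There is, however, one step that as written would fail: the construction of the shadow trajectory. You propose to solve \eqref{SPU} from $(t_2,x_2)$ with the \emph{same open-loop velocity} $v(\cdot)$ that drives the near-optimal trajectory $x_1(\cdot)$. But $v(t)\in G(\tau_1(t),x_1(t))$, and there is no reason for $v(t)$ to belong to $G(\tau_2(t),x_2(t))$; hence the resulting $x_2(\cdot)$ is a solution of a sweeping process with an external forcing, not an admissible trajectory of \eqref{PSP} issued from $(t_2,x_2)$. Consequently the concatenation $T(t_2,x_2)\le(\text{travel time along }x_2)+T(\cdot,x_2(T))$, which is what feeds the terminal distance into the controllability estimate, is not justified. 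The paper repairs exactly this point by invoking a Lipschitz parametrization $G(t,x)=g(t,x,\mathbb B)$ (Theorem 9.7.2 in \cite{AuFr}) and reusing the \emph{control} $u_1(\cdot)$ rather than the velocity: the shadow's velocity $g(\tau_2(t),x_2(t),u_1(t))$ then lies in $G(\tau_2(t),x_2(t))$ by construction, so the shadow is admissible, and the price is an extra $cL_G\bigl(\|x_2(t)-x_1(t)\|+|t_2-t_1|\bigr)$ term in the comparison, which is absorbed into the Gronwall constants $K$ and $K'$. A Filippov-type measurable selection $v_2(t)\in G(\tau_2(t),x_2(t))$ with $\|v_2(t)-v(t)\|\le L_G(\|x_2(t)-x_1(t)\|+|t_2-t_1|)$ would serve equally well. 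With that substitution your argument closes; the remaining discrepancies (the missing factor $2$ in your final integral versus \eqref{mod_cont}, and writing $\mu(d_S(x(t)))$ where the one-step estimate naturally produces $\mu(d_S(x_0))$) are cosmetic.
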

\begin{proof}
Fix $(t_{0},x_{0})\in \mathrm{graph}(C)$. We claim first that 
\begin{equation}\label{est0}
T(t_{0},x_{0}) \leq 2 \int_{0}^{d_{S}(x_{0})} \frac{ds}{\mu(s)}
\end{equation}
(hence it is finite). Let $g$ be a $L_G$-Lipschitz selection from $G$ such that $g(t_{0},x_{0})=\bar{v}$, 
and let $y_{0}$ be the solution of the Cauchy problem
\[ 
\:\left\{ \begin{array}{l}
\dot{x}(t)\,\in \,  -N_{C(t)}(x(t)) + g(t,x(t))\quad \mathrm{a.e.}\; t > t_{0},
\\
x(t_{0})=x_{0}\in C(t_{0})\;,
\\
x(t)\in C(t)\quad \mathrm{for}\;\mathrm{all} \quad t > t_{0}.
\end{array}\right.
\ \\
\]
Fix $t>t_{0}$ and observe that, by the uniform semiconcavity of $d_{S}$ (recall \eqref{semiconc}), for a suitable $p(s)\in N_{C(s)}(y(s))$,
\begin{equation}
\begin{split}
d_{S}(y(t)) & \leq d_{S}(x_{0}) + \int^{t}_{t_{0}} \bar{\xi}\boldsymbol{\cdot}(-p(s)+g(s,y(s)))ds +\frac{1}{r}\|y(t)-x_{0}\|^{2} 
\\
& \leq  d_{S}(x_{0}) + \int^{t}_{t_{0}} \bar{\xi}\boldsymbol{\cdot}(-p(s)+g(t_{0},x_{0}))ds
\\
& \qquad + \int^{t}_{t_{0}} L_G (L+1)(s-t_{0})ds + \frac{L^{2}}{r}(s-t_{0})^{2}
\\
& \leq d_{S}(x_{0}) - (t-t_{0})\mu(d_{S}(x_{0})) + K|t-t_{0}|^{2},
\end{split}
\end{equation}
where $L$ was defined in \eqref{defL} and $K=\frac{L_G(L+1)}{2}+\frac{L^{2}}{r}$,
and we have assumed that $|t-t_{0}| \leq \frac{\delta}{L+1}$.
With the further assumption that
$$|t-t_{0}| \leq \frac{\mu(d_{S}(x_{0}))}{2K},$$
we obtain the estimate
$$ d_{S}(y(t)) \leq d_{S}(x_{0}) - \frac{\mu(d_{S}(x_{0}))}{2}(t-t_{0}).$$
Set 
$$h_{1} = \min \Big\{ \frac{\delta}{L+1}, \frac{\mu(d_{S}(x_{0})}{2K} \Big\} .$$
By repeating the same argument inductively, we construct a sequence of points $\{ y_{i}\}$, of numbers $h_{i}>0$, and of solutions $y_{i}$ of
\[ 
\:\left\{ \begin{array}{l}
\dot{x}(t)\,\in \,  -N_{C(t)}(x(t)) + G(t,x(t))\quad \mathrm{a.e.}\; t > t_{i},
\\
x(t_{i})=y_{i}\in C(t_{i})\;,
\\
x(t)\in C(t)\quad \mathrm{for}\;\mathrm{all} \quad t > t_{i},
\end{array}\right.
\ \\
\]
where $t_{i}=t_{0}+\sum_{j=1}^{i}h_{j}$, satisfying the properties, for $i=0, 1,$ ....,
\begin{equation} \label{est1}
\begin{split}
y_{i+1} & = y_{i}(t_{i+1})
\\
d_{S}(y_{i+1}) & \leq d_{S}(y_{i}) \frac{\mu(d_{S}(y_{i}))}{2}h_{i+1}
\\
h_{i+1} & = \min \Big\{\frac{\delta}{L+1}, \frac{\mu(d_{S}(y_{i}))}{2K}\Big\}.
\end{split}
\end{equation}
Since the sequence $\{ d_{S}(y_{i})\}$ is decreasing and $\mu(\rho)>0$ if $\rho>0$, we obtain that 
$\lim_{i\rightarrow \infty} d_{S}(y_{i})=0$. We wish now to estimate the sum $\sum_{i=1}^{\infty}h_i$, 
namely the time needed by the trajectory of \eqref{PSP} obtained by patching together $y_{i}$ to reach $S$. Indeed \eqref{est1} yields
\begin{equation} \label{est2}
\sum_{i=1}^{\infty} h_{i} \leq 2 \sum_{i=1}^{\infty}\frac{d_{S}(y_{i})-d_{S}(y_{i+1})}{\mu(d_{S}(y_{i}))}.
\end{equation}
Since the right hand side of \eqref{est2} is a Riemann sum of the finite integral $\int_{0}^{d_{S}(x_{0})} \frac{ds}{\mu(\rho)}$, 
we have proved that the constructed trajectory of \eqref{PSP} reaches $S$ in finite time. Since $\mu$ is non decreasing, 
the right hand side of \eqref{est2} is less or equal to $2\int_{0}^{d_{S}(x_{0})}\frac{ds}{\mu(s)}$ and so the claim is verified.

We will deduce now from \eqref{est0} that $T$ is continuous and satisfies \eqref{mod_cont}. To this aim, we use a 
parametrisation theorem (see Theorem 9.7.2 in \cite{AuFr}). According to this result, there exists a single-valued map 
$g:\mathrm{graph}(C)\times \B\rightarrow \R^{n}$ with the following properties:
\begin{itemize}
\item[(a)\;] for all $(t,x)\in\mathrm{graph}(C),$ $G(t,x)=g(t,x,\B)$;
\item[(b)\;] for all $u\in \B,$ $g(\cdot,\cdot, u)$ is Lipschitz continuous with constant $cL_{G}$, where $c$ is a suitable number independent of $G$;
\item[(c)\;] for all $(t,x)\in \mathrm{graph}(C)$, $g(t,x,\cdot)$ is Lipschitz continuous with constant $c$. 
\end{itemize}
Fix $(t_{1},x_{1})$ and $(t_{2},x_{2})$ and consider the Cauchy problems for augmented dynamics
\begin{equation}\label{PASP} 
\left\{ \begin{array}{l}
\dot{\tau}(t) = 1
\\
\dot{x}(t)\,\in \,  -N_{C(\tau(t))}(x(t)) + g(\tau(t), x(t),u)\quad \mathrm{a.e.}\; t > 0,
\\
(\tau(0), x(0))= (t_{i}, x_{i}), \quad i=1,2,
\end{array}\right.
\end{equation}
Let $u_{1}$ be a control taking values in $\B$ steering $(t_{1},x_{1})$ to $\R \times S$ in the optimal 
time $T(t_{1},x_{1})$ and let $(\tau_{i},y_{i})$ be the solution of \eqref{PASP} with $u_1$ in place of $u$ for $i=1,2$, respectively.  
If $y_{2}(T(t_{1},x_{1}))\in S$, then obviously $T(t_{2},x_{2})\leq T(t_{1},x_{1}) $. 
Otherwise, set $\bar{x}_{i}=y_{i}(T(t_{1},x_{1}))$, $i=1,2$. Then, recalling \eqref{est0},
\begin{equation} \label{est3}
T(T(t_{1},x_{1}), \bar{x}_{2})\leq 2\int^{d_{S}(\bar{x}_{2})}_{0} \frac{dr}{\mu(r)} 
\leq 2\int_{0}^{\|\bar{x}_{2}-\bar{x}_{1}\|}\frac{dr}{\mu(r)}.
\end{equation}
To conclude the proof, we need to estimate $\|\bar{x}_{1} -\bar{x}_{2}\|$. 
Indeed, we have for all $t\in[0,T(t_{1},x_{1})]$, recalling the $r$-prox-regularity of $C(t)$ (see \eqref{ineqphi}),
\begin{equation}
\begin{split}
(\dot{y}_{1}(t)-g(\tau_{1}(t), y_{1}(t), u_{1}(t)))\boldsymbol{\cdot} (y_{2}(t)-y_{1}(t))  &\geq - \frac{1}{2r}\|y_{2}(t)-y_{1}(t)\|^{2},
\\
(\dot{y}_{2}(t)-g(\tau_{2}(t), y_{2}(t), u_{1}(t)))\boldsymbol{\cdot} (y_{1}(t)-y_{2}(t))  &\geq - \frac{1}{2r}\|y_{2}(t)-y_{1}(t)\|^{2}.
\end{split}
\end{equation}
By summing the above inequalities and using the Lipschitz continuity of $g$,  we obtain
\begin{equation}
(\dot{y}_{2}(t)-\dot{y}_{1}(t))\boldsymbol{\cdot}(y_{2}(t)-y_{1}(t)) \leq \frac{1}{r}\|y_{2}(t)-y_{1}(t)\|^{2}+
    cL_{G}\big(|t_{2}-t_{1}|+\|y_{2}(t)-y_{1}(t)\|\big)\|y_{2}(t)-y_{1}(t)\|,
\end{equation}
namely
\begin{equation}
\frac{d}{dt}\|y_{2}(t)-y_{1}(t)\|^{2} \leq \Big(cL_{G}+\frac{1}{r}\Big)\|y_{2}(t)-y_{1}(t)\|^{2}+ cA|t_{2}-t_{1}|
\end{equation}
for a suitably large constant  $A>0$, depending on $t$. Therefore, for all $t\in [0,T(t_{1},x_{1})],$
\begin{equation}\label{est4}
\|y_{2}(t)-y_{1}(t)\|\leq \sqrt{2}\, e^{\big(cL_{G}+\frac{1}{r}\big)t}\|x_{2}-x_{1}\| + A'\sqrt{|t_{2}-t_{1}|},
\end{equation}
where $A'$ is a suitable constant depending only on $cA$.
Combining \eqref{est3} and \eqref{est4} and exchanging the role of $(t_{1},x_{1})$ and $(t_{2},x_{2}),$ 
we get \eqref{mod_cont}. The proof is concluded.
\end{proof} 

\begin{rem}
Observe that, if $\mu(r)=\sqrt{r}$, then $T(t,x)$ is H\"older continuous with exponent $\frac{1}{2}$ 
with respect to $x$ and $\frac{1}{4}$ with respect to $t$, while if $\mu$ is constant 
(as in the original Petrov condition), then $T$ is Lipschitz continuous with respect to $x$ 
and $\frac{1}{2}$-H\"older continuous with respect to $t$. 
\end{rem}

\section{Examples}\label{Ex}

This section is devoted to two examples in which a candidate minimum time function  is proved to 
be the exact one by verification. Indeed we show by direct inspection that our guessed function 
satisfies the Hamilton-Jacobi inequalities and so, by the sufficiency part of the characterisation, 
it is actually $T$. We plan to use the necessity part in a forthcoming paper, for numerical approximations. 

\subsection{Example 1}

Let the space dimension be $1$ and set, for $t\geq0$,
$$ C(t)= \{ x\in \R : \; -1+t \leq x \leq 2  \}.$$ 
(observe that $C(t)\neq \emptyset$ if and only if $t\leq 3$) and
$$S= \{ x\in \R : \; x\geq 2 \} .$$
Consider the controlled dynamics
$$\dot{x}(t) \in -N_{C(t)}(x(t)) + x(t)+u(t), \qquad |u(t)|\leq1.$$
Due to the non autonomous character of the problem, the minimum time function to reach $S$ is defined through the augmented dynamics
\begin{equation}\label{(E1)}
\left\{ \begin{array}{l}
\dot{\tau}(t) = 1
\\
\dot{x}(t)\,\in \,  -N_{C(\tau(t))}(x(t)) + x(t) + [-1,1]\quad \mathrm{a.e.}\; t > 0,
\\
(\tau(0), x(0))= (t_{0}, x_{0})\;.
\end{array}\right.
\ \\
\end{equation}
The candidate minimum time function $T$ is the following (we omit the computations which lead us to formulate this guess):
\[
T(t,x):=\left\{ \begin{array}{lll}
1+ \log 3 -t &  & \mathrm{for}\quad -1\leq -1+t \leq x \leq -1+e^{t-1}, \quad 0\le t\le 1,\\
\log3 - \log(1+x) &  & \mathrm{for} \quad -1+e^{t-1} < x\leq 2,\quad 0\le t\le 3.
\end{array}\right.
\]
Figure \ref{Fig1} shows the graph of $T$, together with some curves at which the computation of normal cones is performed in the sequel.
Observe that $T$ is continuous in the whole of $\mathcal{C}:=\mathrm{graph}(C),$ as it is 
expected to be, since the assumptions of the controllability result contained in Proposition 
\ref{continuity} are satisfied. Moreover, $T$ vanishes on $\mathcal{C}\cap (\R \times S)$ 
and it is positive on the remainder of $\mathcal{C}$. Therefore, in order to prove that $T$ 
is actually the minimum time function we are looking for, it is enough to verify the Hamilton-Jacobi 
inequalities $(H_{-})$ and $(H_{+})$ stated in Theorem \ref{main}. In this case, they read as follows, for all $(t,x)\in \mathcal{C}:$
\begin{equation} \label{Hm}
\min_{v \in -N_{C(t)}(x),\; |v|\leq4} (0,v,0)\boldsymbol{\cdot}(p_{t},p_{x},p_{z}) + \min_{|w|\leq 1} (1,x+w, -1)\boldsymbol{\cdot} (p_{t},p_{x},p_{z}) \leq 0
\end{equation}
for all  $p=(p_{t},p_{x}, p_{z})\in N_{\mathrm{epi}(T)} (t,x,T(t,x)),$ and
\begin{equation} \label{Hp}
\min_{v \in -N_{C(t)}(x),\; |v|\leq4} (0,v,0)\boldsymbol{\cdot}(p_{t},p_{x},p_{z}) + \max_{|w|\leq 1} (1,x+w, -1)\boldsymbol{\cdot} (p_{t},p_{x},p_{z}) \leq 0
\end{equation}
for all  $p=(p_{t},p_{x}, p_{z})\in N_{\mathrm{hypo}(T)} (t,x,T(t,x))$.

To this aim, observe first that, for $(t,x)$ in the interior of $\mathcal{C}$ (except at the curve 
$\gamma$: $x=-1 +e^{t-1}$, $0\leq t \leq 1$) and $p=(\frac{\partial T}{\partial t}, \frac{\partial T}{\partial x}, \pm 1)$, 
both inequalities are satisfied as an equality. Second, we check \eqref{Hm} and \eqref{Hp} at points $(t,x,p)$ with 
$(t,x)\in \rm{bdry}\, \mathcal{C}$. Set $E=\mathrm{epi}(T)$ and $H=\mathrm{hypo}(T)$ and observe that for a 
horizontal $p$, i. e., $p=(p_{t},p_{x}, 0),$ we need to verify only \eqref{Hp}, since the horizontal normal vectors 
are in common between normal cones of the epigraph and of the hypograph.

The verification procedure is employed as follows. Consider, for instance, the point $(0,-1, 1+\log 3)$. 
Then the normal cones to the epigraph and to the hypograph respectively, together with $N_{C(0)}(-1)$, admit the following representation: 
\begin{equation}
\begin{split}
N_{E}(0,-1, 1+ \log 3)&= \mathrm{cone} \{ (0,-1,0), (1,-1, 0), (-1,0,-1)\},
\\
N_{H}(0,-1, 1+ \log 3)& = \mathrm{cone} \{ (0,-1,0), (1,-1, 0), (1,0,1)\},
\\
N_{C(0)}(-1) &= (-\infty, 0].
\end{split}
\end{equation}
For $p=(0,-1,0)$, \eqref{Hp} reads as 
$$ \min_{v \in 0\leq v\leq4} (-v) + \max_{|w|\leq 1} (1-w)=-4+2=-2 < 0, $$
while for $p=(1,-1,0)$, \eqref{Hp} reads as 
$$ \min_{v \in 0\leq v\leq4} (-v) + \max_{|w|\leq 1} (2-w)=-4+3=-1 < 0. $$
For $p=(-1,0,-1)$, \eqref{Hm} reads as 
$$ \min_{v \in 0\leq v\leq4} (-v) + (-1)+1=-4 < 0, $$
and a similar computation holds for \eqref{Hp} at $(1,0,1)$.

The same procedure can be repeated for the points $(3,2,0)$, $(0,2,0)$ and the segment $\{ (t,2,0): \; 0<t<3 \}.$
In this case, one can easily verify that normal cones of interest in such points can be written as
\begin{equation}
\left\{
\begin{aligned}
N_{E}(3,2,0)&= \mathrm{cone} \{ (0,1,0), (1,-1, 0), (0,-1/3,-1)\},
\\
N_{H}(3,2,0)& = \mathrm{cone} \{ (0,1,0), (1,-1, 0), (0,1/3,1)\},
\\
N_{C(3)}(2)& = \R,
\end{aligned}
\right.
\end{equation}
\begin{equation}
\left\{
\begin{aligned}
N_{E}(0,2,0)&= \mathrm{cone} \{ (0,1,0), (-1,0, 0), (0,-1/3,-1)\},
\\
N_{H}(0,2,0)& = \mathrm{cone} \{ (0,1,0), (-1,0, 0), (0,1/3,1)\},
\\
N_{C(0)}(2)& = [0,\infty),
\end{aligned}
\right.
\end{equation}
\begin{equation}
\left\{
\begin{aligned}
N_{E}(t,2,0)&= \mathrm{cone} \{ (0,1,0), (0,-1/3,-1)\},
\\
N_{H}(t,2,0)& = \mathrm{cone} \{ (0,1,0), (0,1/3,1)\},
\\
N_{C(t)}(2)& = [0,\infty),
\end{aligned}
\right.
\end{equation}
respectively.

It is now a straightforward matter to check the inequalities \eqref{Hp}, \eqref{Hm} evaluated at  points $(3,2,0)$, $(0,2,0)$ and  
along the segment $\{ (t,2,0): \; 0<t<3 \}$, taking into account the $p$'s that generate the normal cones of the epigraphs 
and of the hypographs. (Again, we observe that for horizontal normal vectors it is enough to check just \eqref{Hp}). 

In the same spirit, we consider now the curves contained in $\mathrm{graph}(T):$
\begin{equation}
\begin{split}
& \Gamma_{1} := \{(t,-1+t, 1 +\log3 -t): \; 0 < t < 1 \},
\\
& \Gamma_{2} := \{(0,x, 1 +\log3 ): \;-1 < x < -1 +e^{-1} \},
\\
& \Gamma_{3} := \{(t,-1+t, 1 +\log3 -\log t): \; 1 < t < 3 \},
\\
& \Gamma_{4} := \{(0,x, \log3 - \log(1+x) ): \; -1+e^{-1} < x < 2 \},
\\
& \Gamma_{5}:=\{(t,e^{t-1}-1, 1 +\log3 - t): \; 0 \leq t \leq 1 \}.
\end{split}
\end{equation}
which are either singular for the minimum time function $T$, or lying at the boundary of epi/hypo\-graph of $T$.

An analysis  along $\Gamma_{1}$ provides the representations
\begin{equation}
\begin{split}
N_{E}(t,-1+t,1+ \log 3 -t)&= \mathrm{cone} \{ (1,-1,0), (-1,0, -1) \},
\\
N_{H}(t,-1+t,1+ \log 3 -t) &= \mathrm{cone} \{ (1,-1,0), (1,0, 1)\},
\\
N_{C(t)}(-1+t) &= (-\infty, 0].
\end{split}
\end{equation}
For $p=(1,-1,0),$ \eqref{Hp} reads as
$$ \min_{v \in 0\leq v\leq4} (-v) + \max_{|w|\leq 1} (-(-1+t+w)+1)=-4+3-t=-1-t < 0; $$
for $p=(-1,0,-1)$,   $(H_{-})$ is equal to $0$, as well as $(H_{+})$ for $p=(1,0,1)$.

The cases which comprise $\Gamma_{2}$, $\Gamma_{3}$ and $\Gamma_{4}$ are treated in a similar fashion. 
We mention that the cones related to epigraphs and hypographs for such curves are
\begin{equation}
\left\{
\begin{aligned}
N_{E}(0,x,1+ \log 3)&= \mathrm{cone} \{ (-1,0,0), (-1,0, -1) \},
\\
N_{H}(0,x,1+ \log 3)& = \mathrm{cone} \{ (-1,0,0), (1,0, 1)\},
\\
N_{C(0)}(x) &= \{0\}.
\end{aligned}
\right.
\end{equation}
\begin{equation}
\left\{
\begin{aligned}
N_{E}(t,-1+t,1+ \log 3 -\log t)&= \mathrm{cone} \{ (1,-1,0), (0,-\frac{1}{t}, -1) \},
\\
N_{H}(t,-1+t,1+ \log 3 -\log t) &= \mathrm{cone} \{ (1,-1,0), (0,\frac{1}{t}, 1)\},
\\
N_{C(t)}(-1+t) &= (-\infty, 0].
\end{aligned}
\right.
\end{equation}
\begin{equation}
\left\{
\begin{aligned}
N_{E}(0,x,\log 3 -\log (1+x))&= \mathrm{cone} \{ (-1,0,0), (0,-\frac{1}{x+1}, -1) \},
\\
N_{H}(0,x,\log 3 -\log (1+x)) &= \mathrm{cone} \{ (-1,0,0), (0,\frac{1}{x+1}, 1) \},
\\
N_{C(0)}(x) &= \{ 0 \}.
\end{aligned}
\right.
\end{equation}
for $\Gamma_{2}$, $\Gamma_{3}$ and $\Gamma_{4}$, respectively. For what concerns $\Gamma_{5}$, it turns out 
that $N_{E}=\{ 0\}$ since $\mathrm{graph}(T)$ has an upward kink. The only inequality to check
is therefore \eqref{Hp}, which is confirmed by passing to the limit along the graph of $T$. Finally, at the vertical spots of $\mathrm{epi}(T)$, 
resp. of $\mathrm{hypo}(T)$, the verification procedure is a special case of the computation above.

This shows that $T$ is the minimum time function for the optimal control problem governed by \eqref{(E1)}.

Observe, finally, that at points $(t,-1+t),$ $0\leq t < 1$,  due to the dragging of the moving constraint,
the velocity of the optimal trajectory  is $(1,1)$, 
which is neither the projection of the optimal unconstrained velocity $(1,-1+t+1)=(1,t)$ onto $T_{\mathrm{graph}(C)}(t,-1+t),$ 
nor it is the Cartesian product between $\{ 1\}$ and the projection of $-1+t+1=t$ onto $T_{C(t)}(-1+t).$

\subsection{Example 2}
This example is concerned with a \textit{fixed} set $C$. Let
\[
C =\{ X=(x,y)\in\mathbb{R}^2 : -5\le x \le 5, \: 0\le y \le 4, \: x^2 + (y-2)^2 \ge 1\}.
\]
Observe that $C$ is $r$--prox-regular, with $r=1$.

Let 
\[
G = \{(x,y) : |x| \le 1,\: |x|\le y \le 1 \},
\qquad
F(x) = -N_C(x) + G,
\]
and
\[
S = \{ (x,y) : y\ge 4\}.
\]
Consider the problem of  reaching $S$ in minimum time subject to the dynamics
\[
\dot{X} \in -N_C(X) + G,\; X(0)=X\in C
\]
and let $T(X)$ be the minimum time function. Observe that the assumptions of Proposition \ref{continuity} are satisfied, 
so $T$ is continuous. For future use, let also $T_0(X)$ be the minimum time to reach $S$ subject to the dynamics
\[
\dot{X} \in -N_{C_1}(X) + G,\; X(0)=X\in C, \text{where}
\]
\[
C_1:= \{ (x,y) : -5\le x \le 5, \: 0\le y \le 4\}.
\]
Of course $T$ differs from $T_0$ only in the region
\[
D := \Big\{ (x,y)\in C : -\frac{1}{\sqrt{2}} < x < \frac{1}{\sqrt{2}},\: |x| + 2 - \sqrt{2} < y < 2 -\frac{\sqrt{2}}{2} \Big\}, 
\]
and in $C\setminus D$ we have $T(x,y) = 4-y$ and the optimal trajectory is a segment. 
In the region $D$, we make an educated guess on the the optimal trajectory and then proceed by verification. By symmetry,
we consider only the part $D^+$ of $D$ which is contained in the half plane $\{ (x,y) : x \ge 0\}$. The candidate optimal control is the constant $(1,1)$
until the corresponding trajectory hits the circle $\Delta:=\{ x^2 + (y-2)^2 =1\}$ and then the 
reaction of the constraint switches on by projecting $(1,1)$ onto the
tangent line. The candidate optimal trajectory then moves along the circle until the point $P:=\big(\frac{\sqrt{2}}{2},2-\frac{\sqrt{2}}{2}\big)$
-- having tangent parallel to $(1,1)$ -- is reached, and then proceeds again according to the control $(1,1)$. 
More precisely, fix $2-\sqrt{2} < y_0 \le 1$.
Then the minimum time to reach $\Delta$ from $(0,y_0)$ with optimal control $(1,1)$ is
\begin{equation}\label{T1}
T_1(y_0) := \frac{1}{2} \left(-\sqrt{-y_0^2+4 y_0-2}-y_0+2\right)
\end{equation}
and the corresponding point on $\Delta$ is 
$X(y_0)= \left(T_1(y_0),\frac{1}{2} \left(-\sqrt{-y_0^2+4 y_0-2}+y_0+2\right)\right)$. 
The angular velocity $\alpha$ of the candidate optimal trajectory satisfies the ODE
\[
\dot{\alpha} = \sqrt{2} \cos \left(\frac{\pi}{4}-\alpha\right)
\]
and the initial condition is $\alpha (0) = \arcsin T_1(y_0)$. The general solution of the ODE is
\[
\alpha (t) = 2\arctan\Big(\sqrt{2} \tanh \big(\sqrt{2} t +c\big) +1\Big),
\]
where $0\le c\le \pi/4$ is a constant. The corresponding time to reach the angle $\pi/4$ starting from $\alpha (0)$ is
\[
T_2(y_0) :=  \sqrt{2}\Bigg[\tanh ^{-1}(1-\sqrt{2}\big) -\tanh ^{-1}\Bigg(\frac{\tan \Big(\frac{1}{2} \arcsin T_1(y_0)\Big)-1}{\sqrt{2}}\Bigg)\Bigg].
\]
Therefore, the candidate minimal time to reach $S$ from $(0,y_0)$ is the sum of the time $T_1$ needed to hit $\Delta$, of the time $T_2$
needed to slide along $\Delta$ until $P$ is reached, and of the time needed to reach $S$ from $P$, namely it is
\[
T_3(y_0) := T_1 (y_0) + T_2 (y_0) + 2 +\sqrt{2}/2.
\] 
In general, for $(x_0,y_0)\in D^+$, we have
\[
T(x_0,y_0) = T_3 (y_0-x_0) - x_0, 
\]
whence, if $(x,y)$ belongs to the interior of $D^+$, 
\[
\begin{split}
\partial_x T (x, y ) &= -1 - T_1'(y-x)\Bigg( 1 - \frac{1}{\sqrt{1-T_1^2(y-x)}\big(T_1(y-x) + \sqrt{1-T_1^2 (y-x)}\big)}\Bigg)\\
\partial_y T (x, y ) &= T_1'(y-x)\Bigg( 1 - \frac{1}{\sqrt{1-T_1^2(y-x)}\big(T_1(y-x) + \sqrt{1-T_1^2 (y-x)}\big)}\Bigg),
\end{split}
\]
where we recall from \eqref{T1} that
\[
T_1'(\xi)=  -\frac{1}{2} +\frac{\xi-2}{2\sqrt{-\xi^2+4 \xi-2}}. 
\]
For future use, we compute (the limit of) the derivative of $T$ in the direction (normal to $\Delta$) $v=(x,-\sqrt{1-x^2})$ at points of the arc
$(x, 2- \sqrt{1-x^2})$, $0<x<\frac{1}{\sqrt{2}}$. To this aim, observe first that, for $0<x<\frac{1}{\sqrt{2}}$, we have
\[
T_1\big( 2-\sqrt{1-x^2}-x\big)-x=  \frac{1}{2} \left(\sqrt{1-x^2}-\sqrt{1-2 x \sqrt{1-x^2}}-x\right)=0
\]
and, taking into account the above facts
\[
T_1'\big( 2-\sqrt{1-x^2}-x\big) = -\frac{\sqrt{1-x^2}}{\sqrt{1-2x\sqrt{1-x^2}}}.
\]
Then we receive
\[
\begin{split}
\partial_x T \big( x, 2-\sqrt{1-x^2}\big) & = -1 + \frac{\sqrt{1-x^2}}{\sqrt{1-2x\sqrt{1-x^2}}}
                     \left( 1- \frac{1}{\sqrt{1-x^2}\left( x + \sqrt{1-x^2}\right) }\right)\\
\partial_y T \big( x, 2-\sqrt{1-x^2}\big)  &= - \frac{\sqrt{1-x^2}}{\sqrt{1-2x\sqrt{1-x^2}}}
                     \left( 1- \frac{1}{\sqrt{1-x^2}\left( x + \sqrt{1-x^2}\right)} \right).
\end{split}
\]
By simplifying, using the identity
\[
-x+\sqrt{1-x^2}=\sqrt{1-2x\sqrt{1-x^2}},\qquad |x|\le \frac{\sqrt{2}}{2},
\]
we obtain that the normal derivative $\partial T/\partial v$ at $(x, 2- \sqrt{1-x^2})$, $0<x<\frac{1}{\sqrt{2}}$,
namely the scalar product
\begin{equation}\label{normder}
\left(\partial_x T ( x, 2-\sqrt{1-x^2}), \partial_y T ( x, 2-\sqrt{1-x^2})\right)\boldsymbol{\cdot} \left( x,-\sqrt{1-x^2}  \right)=0.
\end{equation}
Observe furthermore that $T$ is not Lipschitz around the point $\big(\frac{\sqrt{2}}{2},2-\frac{\sqrt{2}}{2}\big)$, since at 
$\xi = 2-\frac{\sqrt{2}}{2}-\frac{\sqrt{2}}{2} = 2 -\sqrt{2}$ we have $T_1'(\xi) = -\infty$.

Now we wish to verify that $T$ is indeed the minimum time by proving that it satisfies both Hamiltonian conditions. 
Set $E$ to be the epigraph of $T$ and $H$ to be its hypograph.
We wish to verify that
\begin{align}
\mathcal{H}_-(X,p) &:= \min_{v\in \big((-N_{C}(X))\times \{ 0\}\big)\cap \sqrt{2}\mathbb{B}} v\boldsymbol{\cdot} p +
\min_{v\in G} (v,-1)\boldsymbol{\cdot} p\le 0,
\quad \forall p\in N^P_{E}(X,T(X))\label{cond-}\\
\mathcal{H}_+(X,p) &:= \min_{v\in \big((-N_{C}(X))\times \{ 0\}\big)\cap \sqrt{2}\mathbb{B}} v\boldsymbol{\cdot} p +
\max_{v\in G} (v,-1)\boldsymbol{\cdot} p\le 0,
\quad \forall p\in N^P_{H}(X,T(X)).\label{cond+}
\end{align}
We exploit the symmetry and proceed only in the region $C^+:= \{ (x,y)\in C : x \ge 0\}$. 
To this aim, observe first that both in the interior and in the exterior of the region
$D^+$ (within $C$) the function $T$ is differentiable and its gradient satisfies both Hamiltonian conditions as an equality. 
In fact, since in both conditions the first summand vanishes, we have first to verify that the (minimized)
Hamiltonian equals $-1$ at differentiability points. In int$D^+$, the minimum is attained by choosing $v = (1,1)$
and it is evident that in this case the condition is verified (i.e., $\partial_x T (x, y ) + \partial_y T (x, y ) \equiv -1$), 
while in the intersection of $C$ with the exterior of $D^+$ the minimum is attained at any point of $G$ of the type
$(x,1)$, $x\ge 0$, and in this case $\partial_y T (x, y ) \equiv -1$. 
Thus also at the part of the boundary of $D^+$ which is contained
in the interior of $C$ the condition is verified, since it is easy to check by direct inspection that
the normal cone to the epigraph (resp. hypograph) of $T$ is either zero, 
or is a convex combination of limiting normal vectors. Therefore, we are left to check conditions 
\eqref{cond-} and \eqref{cond+} at points of the graph corresponding to the circle 
$\Delta$, at the two relevant vertices of the rectangle, at the three relevant
segments of its perimeter (without endpoints), and at the cylinder as well as at the vertical 
rays/faces parallel to the $t$-axis placed above (resp.~below) 
the above mentioned points/sets.

Let us consider first the point $(5,0,4)$. Then
\[
\begin{split}
N_E(5,0,4) &= \mathrm{cone}\, \{ (1,0,0), (0,-1,0), (0,-1,-1)  \}\\
N_H(5,0,4) &= \mathrm{cone}\, \{ (1,0,0), (0,-1,0), (0,1,1) \}\\
-N_C (5,4) &= \mathrm{cone}\, \{ (-1,0),(0,1) \}.
\end{split}
\]
Of course, it is enough to check \eqref{cond-}, resp.~\eqref{cond+},
at the vectors generating $N_E$, resp.~$N_H$, and $N_C$. If $p=(1,0,0)$, then $\mathcal{H}_-\le\mathcal{H}_+=-\sqrt{2}$. 
If $p=(0,-1,0)$, then $\mathcal{H}_+ = -\sqrt{2}$. If $p=(0,-1,-1)$, then $\mathcal{H}_- = -\sqrt{2}$. 
If $p=(0,1,1)$, then $\mathcal{H}_+ = 0$.

Let us consider now the points $(5,y,4-y)$, with $0<y<4$, which belong to graph$(T)$. Then
\[
\begin{split}
N_E(5,y,4-y) &= \mathrm{cone}\, \{ (1,0,0), (0,-1,-1)  \}\\
N_H(5,y,4-y) &= \mathrm{cone}\, \{ (1,0,0), (0,1,1) \}\\
-N_C (5,y) &= \mathrm{cone}\, \{ (-1,0) \}.
\end{split}
\]
If $p=(1,0,0)$, then $\mathcal{H}_+ = 1-\sqrt{2}$. If $p=(0,-1,-1)$, then $\mathcal{H}_- = 0$, 
and if $p=(0,1,1)$, then $\mathcal{H}_+ = 0$ as well.

Let us consider now the point $(5,4,0)$. Then
\[
\begin{split}
N_E(5,4,0) &= \mathrm{cone}\, \{ (1,0,0), (0,1,0), (0,-1,-1)  \}\\
N_H(5,4,0) &= \mathrm{cone}\, \{ (1,0,0), (0,1,0), (0,1,1) \}\\
-N_C (5,4) &= \mathrm{cone}\, \{ (-1,0),(0,-1) \}.
\end{split}
\]
If $p=(1,0,0)$, then $\mathcal{H}_+ = 1-\sqrt{2}$. 
If $p=(0,1,0)$, then $\mathcal{H}_+ = 1-\sqrt{2}$. If $p=(0,-1,-1)$, then $\mathcal{H}_- = -1$. 
If $p=(0,1,1)$, then $\mathcal{H}_+ = 0$.

We pass now to considering the points of the two arcs $A_1:=\big\{ (x,y,T(x,y)): (x,y) \in \Delta, x\ge 0, 1 < y < 2-\frac{\sqrt{2}}{2}\big\} $ and
$A_2:=\big\{ (x,y,T(x,y)): (x,y) \in \Delta,  x\ge 0, 2-\frac{\sqrt{2}}{2} < y < 3 \big\}$. In both cases we have
\[
\begin{split}
N_E(x,y,T(x,y)) &=\mathrm{cone}\, \{ (-x,2-y,0), (\partial_xT(x,y),\partial_yT(x,y),-1) \}\\
N_H(x,y,T(x,y)) &=\mathrm{cone}\, \{ (-x,2-y,0), (-\partial_xT(x,y),-\partial_yT(x,y),1) \}\\
-N_C(x,y)       &=\mathrm{cone}\, \{ (x,y-2) \}.
\end{split}
\]
Along the arc $A_1$, if $p= (-x,2-y,0)$, we have that $\mathcal{H}_- = -2$ and $\mathcal{H}_+ \le 0$, 
while if $p= (\partial_xT(x,y),\partial_yT(x,y),-1)$, taking into account
that $\nabla T$ on $A_1$ has vanishing scalar product with $(x,y-2)$ and also that $\partial_x T + \partial_y T \equiv -1$, we have that
$\mathcal{H}_- = 0$. If $p=(-\partial_xT(x,y),-\partial_yT(x,y),1)$, by the same argument we obtain $\mathcal{H}_+ = 0$. 
Along the arc $A_2$, we obtain $\mathcal{H}_-=\mathcal{H}_+\le 0$, where in the first summand one must choose $v=0$ if $y < 2$.

The verification of \eqref{cond-} and \eqref{cond+} at junction points $(0,y)$, $0\le y \le -1$, where $T$ is nonsmooth, is done by passing to the limit
from the interior of $D^+$ and exploiting the symmetry, while the verification of \eqref{cond-} and \eqref{cond+} at the 
remaining points (laying on vertical spots) is easier and its analysis is contained in the previous cases.

Thus, $T$ is the minimum time function.

\section{Conclusions and comparison with the H-J theory for state constrained optimal control problems}\label{sec:comp}

Our main result is a characterisation of the (non autonomous) minimum time function subject to a non-Lipschitz dynamic, 
including a Lipschitz perturbation or Moreau's sweeping process. 

The novelty of the result lies on one hand on the lack of regularity of the velocity set of \eqref{PSP}, 
on the other on the interpretation of \eqref{PSP}, which can be given in terms of (moving) state constraints. 
In fact, in \eqref{PSP} the state constraint is accommodated into the dynamics, through the normal cone to the moving set, 
whose non-emptiness forces $x(t)$ to belong to the constraint $C(t)$. An interpretation of \eqref{PSP} 
through viability (weak flow invariance) was given by Henry \cite{He} in the case where $C(t)\equiv C$ is a convex set, 
and later generalised by Cornet to tangentially regular (also known as sleek) sets \cite{Co}, 
a case which comprises prox-regularity: actually, \eqref{PSPa} and projected differential inclusion     
\begin{equation}\label{PDI}
\left\{ \begin{array}{l}
\dot{x}(t)\,\in \,  \Pi_{T_{C}(x(t))}G(x(t))\quad \mathrm{a.e.}\; t > 0,
\\
x(0)= x_{0}\in C\;
\end{array}\right.
\end{equation}
are equivalent. Here, $\Pi_{T_{C}(x(t))}(z)$ denotes the (unique) metric projection of $z$ onto the (convex) 
tangent cone to $C$ at $x(t)$. This equivalence shows that the (internal) normal part of \eqref{PSPa} 
annihilates the component of $G$ which points outwards $C$, and this effect is obtained with 
the minimum effort with respect to the length of the added normal component.

The interpretation  of \eqref{PSPa} through \eqref{PDI} gives some insights into the behaviour of 
trajectories close to the constraint: first, a trajectory is allowed, and in some cases is 
actually forced, to slide on the boundary of $C$. This 
phenomenon shows clear differences between our dynamics with active constraints \eqref{PSPa} and other 
approaches to state constrained optimal control problems. In fact, the structure of \eqref{PDI} makes irrevelant the inward/outward pointing conditions that are often required.

The literature on constrained control problems  is vast and growing. Among it, two approaches can be recognised. 
The first one requires compatibility conditions between the dynamics and the constraint, which is usually stated 
as an inward/outward pointing condition on the velocity set $G$ with respect to the normal cone $N_{C}(\cdot)$. 
Among the many results of Hamilton-Jacobi type, we quote the earliest papers by Soner \cite{So1}, \cite{So2} (with the inward pointing 
condition), \cite{FV} (with the outward pointing condition and a discussion explaining its role in the framework of the 
approach using the invariance of the epi/hypographs that we also adopt), and \cite{FM}, \cite{FM1} which contain 
the state of the art on the subject. The inward/outward pointing condition approach makes a strong use 
of distance estimate tools (see, e.g., \cite{BeFrVi}), which are not used here. We further
observe that in our dynamics the outward pointing condition is forbidden (see \eqref{PDI}), 
while the inward one is irrelevant. On the other hand, in the approach of Soner, Frankowska and Vinter a priori 
regularity properties of the constraint $C$ play a minor role. Some regularity properties on $C$, however, follow from 
the inward pointing condition. For example, the non-emptiness of the interior of the Clarke tangent cone to $C$ is required. 

Our approach -- instead -- requires the assumption of prox-regularity of $C$. This condition is, on one hand, 
more restrictive, since in particular does not allow inward corners, on the other permits outward cusps, 
which exhibit empty Clarke tangent cone. 

The second approach is based on the fact that actual constraints are usually tame, and so both $C$ and the 
dynamics are assumed to be stratified (see \cite{BrHo}, \cite{BaWo}, \cite{HeZi}). In this approach, 
the main difficulty to be overcome is the Zeno phenomenon, namely touching a stratum on a totally 
disconnected bounded infinite set of times and patching together the Hamiltonians related to each stratum. 
Such a phenomenon does not represent a difficulty in our framework, since our Hamiltonian admits a unitary representation.

Finally, we note that Example 1 shows that the interpretation of the non autonomous dynamics \eqref{PSP_intro} through a projection of the
controlled part onto the tangent cone to the constraint is not valid. Therefore, the technique on which the results contained in \cite{Se05} are based
cannot be used in our framework.

\medskip

\noindent \textbf{Acknowledgment.} The authors wish to thank Peter Wolenski for kindly suggesting the problem and useful discussions, and 
H\'{e}l\`{e}ne Frankowska for suggesting to use the hypograph of $T$ (in substitution to the epigraph with the reversed dynamics)
under a continuity assumption. They are also indebted with the anonymous Associate Editor, for the careful reading and numerous
suggestions to improve the presentation.

\end{document}